 \newtheorem{thm}{Theorem}[section]
 \newtheorem{cor}[thm]{Corollary}
 \theoremstyle{definition}
 \newtheorem{defn}[thm]{Definition}
 \theoremstyle{remark}
 \newtheorem{rem}[thm]{Remark}
 \newtheorem{ex}[thm]{Example}
 \numberwithin{equation}{section}
\begin{document}

%
%
%
%
%
%
%
%
%

\title[On the Interpolating Sesqui-Harmonicity of Vector Fields]
 {On the Interpolating Sesqui-Harmonicity of Vector Fields}

\author[Bouazza Kacimi]{Bouazza Kacimi}

\address{%
University of Mascara\\
Faculty of Exact Sciences\\
Department of Mathematics\\
BP 305 Route de Mamounia\\
29000 Mascara\\
Algeria}

\email{bouazza.kacimi@univ-mascara.dz}

\author{Amina Alem}
\address{%
University of Mascara\\
Faculty of Exact Sciences\\
Department of Mathematics\\
BP 305 Route de Mamounia\\
29000 Mascara\\
Algeria}

\email{amina.alem@univ-mascara.dz}

\author{Mustafa \"{O}zkan}
\address{ Gazi University \\
Faculty of Sciences\\
Department of Mathematics \\
06500 Ankara\\
Turkey}
\email{ozkanm@gazi.edu.tr}

\subjclass{Primary 58E20; Secondary 53C20}

\keywords{Tangent bundle, Sasaki metric, Interpolating sesqui-harmonic maps}

\date{January 1, 2004}


\begin{abstract}
This article deals with the interpolating sesqui-harmonicity of a vector field $X$ viewed as a map from a Riemannian manifold $(M,g)$ to its tangent bundle $TM$ endowed with the Sasaki metric $g_{S}$. We show characterization theorem for $X$ to be interpolating sesqui-harmonic map. We give also the critical point condition which characterizes interpolating sesqui-harmonic vector fields. When $(M,g)$ is compact and oriented and under some conditions, we prove that $X$ is an interpolating sesqui-harmonic vector field (resp. interpolating sesqui-harmonic map) if and only if $X$ is parallel. Moreover, we extend this result for a left-invariant vector field on a Lie group $G$ having a discrete subgroup $\Gamma$ such that the quotient $\Gamma\backslash G$ is compact.
\end{abstract}
\maketitle
\section{Introduction}
The theory of harmonic maps is of essential importance in both geometry, analysis and physics. For example, on mathematical side harmonic maps are among the most studied variational problems in geometric analysis, and on physical side they are closely related to nonlinear field theory in theoretical physics. Given a smooth map
$\varphi: M\rightarrow N$ between two Riemannian manifolds $(M,g)$ and $(N,h)$ of dimensions $m$ and $n$ respectively, the map $\varphi$ is said to be harmonic if it is a critical
point of the energy functional defined by:
\begin{equation}\label{eq1.1}
    E(\varphi)=\frac{1}{2}\int_{D}\left\|d\varphi\right\|^{2}v_{g},
\end{equation}
for any compact domain $D$ of $M$, where $v_{g}$ is the volume element of $(M,g)$. The Euler-Lagrange equation of $E(\varphi)$ is \cite{baird,eells2}
\begin{equation*}
    \tau(\varphi)=\operatorname{Tr}_{g}(\nabla d\varphi)=\sum_{i=1}^{m}
    \{\nabla^{\varphi}_{e_{i}}d\varphi(e_{i})-d\varphi(\nabla_{e_{i}}e_{i})\}=0,
\end{equation*}
where $\tau(\varphi)$ is the tension field of $\varphi$, here $\nabla^{\varphi}$ is the connection on the vector bundle $\varphi^{-1}TN$ induced from the Levi-Civita connection $\nabla^{N}$ of $(N,h)$, $\nabla$ is the Levi-Civita connection of $(M,g)$ and $\{e_{i}\}_{i=1}^{m}$ is a
local orthonormal frame field of $(M,g)$. For a recent survey on harmonic maps see \cite{helein}. Among the first generalizations of harmonic maps is the notion of polyharmonic maps of order $k$ between Riemannian manifolds introduced by Eells and Lemaire in \cite{eells1}. For $k=2$, they defined the bienergy of $\varphi$ as the functional
\begin{equation}\label{eq1.2}
    E_{2}(\varphi)=\frac{1}{2}\int_{D}\left\|\tau(\varphi)\right\|^{2}v_{g},
\end{equation}
for any compact domain $D$ of $M$. The map $\varphi$ is said to be biharmonic if it is a critical point of the bienergy functional \eqref{eq1.2}. The associated Euler-Lagrange equation is derived by Jiang \cite{jiang} as follows:
\begin{equation*}
    \tau_{2}(\varphi) = \Delta^{\varphi}\tau(\varphi)-
    \overset{m}{\underset{i=1}{\sum}}R^{N}(\tau(\varphi),d\varphi(e_{i}))d\varphi(e_{i})=0,
\end{equation*}
where $\tau_{2}(\varphi)$ is the bitension field of $\varphi$,
$\Delta^{\varphi}\tau(\varphi)=-\sum_{i=1}^{m}
(\nabla^{\varphi}_{e_{i}}\nabla^{\varphi}_{e_{i}}\tau(\varphi)-
\nabla^{\varphi}_{\nabla^{M}_{e_{i}}e_{i}}\tau(\varphi))$ is the rough Laplacian on $\varphi^{-1}TN$,
and $R^{N}$ is the curvature tensor of the target manifold $N$. By definition, it can be seen that every harmonic map is biharmonic. However, a biharmonic map can be non-harmonic in which case it is called proper biharmonic. Biharmonic maps have been studied by several researchers see for instance \cite{ou1,urakawa} and references therein.

Branding \cite{branding1}, introduced an action functional for maps between Riemannian manifolds that interpolated between the actions for harmonic and biharmonic maps as follows:
 \begin{equation}\label{eq1.3}
    E_{\delta_{1},\delta_{2}}(\varphi)=\delta_{1}\int_{D}\left\|d\varphi\right\|^{2}v_{g}+
    \delta_{2}\int_{D}\left\|\tau(\varphi)\right\|^{2}v_{g},
\end{equation}
where $\delta_{1},\delta_{2}\in\mathbb{R}$. The functional \eqref{eq1.3} is
used in string theory of physics, it is known as bosonic string with extrinsic curvature term, see \cite{polyakov}. The map $\varphi$ is said to be interpolating sesqui-harmonic if it is a critical point of the functional \eqref{eq1.3}.
The associated Euler-Lagrange equation is \cite{branding1}
\begin{equation}\label{eq1.4}
    \tau_{\delta_{1},\delta_{2}}(\varphi) =\delta_{1}\tau(\varphi)+ \delta_{2}\tau_{2}(\varphi)=0.
\end{equation}
We mention that the field $\tau_{\delta_{1},\delta_{2}}(\varphi)$ is appeared with different sign from the expression in \cite{branding1} due to the different sign of the bitension field. It is obvious that harmonic maps solve \eqref{eq1.4}. Moreover, if $\delta_{1}=0$ and $\delta_{2}=1$, then an interpolating sesqui-harmonic map turns into a biharmonic map. So, harmonic maps and biharmonic maps are subclass of interpolating sesqui-harmonic maps. Note that there have been several articles dealing with particular aspects of \eqref{eq1.3} see \cite{branding1}. In \cite{branding2}, Branding studied various analytic aspects of interpolating sesqui-harmonic maps between Riemannian manifolds particulary a spherical target. In \cite{karaca}, Karaca et al. considered
interpolating sesqui-harmonic curves in Sasakian space forms and obtained the necessary and sufficient conditions for Legendre curves to be interpolating sesqui-harmonic.

On the other hand, denote by $\mathfrak{X}(M)$ the set of all smooth vector
fields on $M$ and by $g_{S}$ the Sasaki metric on the tangent bundle $TM$. Any $X\in\mathfrak{X}(M)$ defines a
smooth map from $(M,g)$ to $(TM,g_{S})$. When $M$ is compact, it was proved in \cite{ishihara} and also in \cite{nouhaud} that $X:(M, g)\rightarrow (TM,g_{S})$ is an harmonic map if and only if $X$ is parallel; moreover this result remains true if $X$ is a harmonic section of $TM$, i.e., a critical point of the energy functional $E$ restricted to the set $\mathfrak{X}(M)$  \cite{medrano}. The bienergy of $X\in\mathfrak{X}(M)$ is the bienergy of the corresponding map see \cite{markellos1}. In \cite{markellos1}, the authors obtained the critical point of the bienergy functional $E_{2}$ restricted to the set $\mathfrak{X}(M)$ (equivalently, $X$ is a biharmonic vector field) and showed that if $M$ is compact, then $X$ is biharmonic vector field (resp. biharmonic map) if and only if $X$ is parallel. In \cite{alem}, the authors established the formula of the bitension field of $X:(M, g)\rightarrow (TM,g_{S})$, and obtained characterization theorem for $X$ to be biharmonic map, furthermore they gave the relationship between the notion of biharmonic vector field and that of vector field which is biharmonic map. Especially, they proved that a  left-invariant vector field $X$ on three dimensional unimodular Lie group is biharmonic vector field (resp. biharmonic map) if and only if $X$ is parallel.

In this note, we will study the interpolating sesqui-harmonicity of $X\in\mathfrak{X}(M)$ viewed as a map  $X:(M, g)\rightarrow (TM,g_{S})$ building on the developed results for harmonic vector fields \cite{dragomir,medrano}, vector fields which are harmonic maps \cite{ishihara,nouhaud}, biharmonic vector fields \cite{markellos1} and vector fields which are biharmonic maps \cite{alem}. So, it is natural to address
the problem of characterizing those vector fields for which the corresponding
map is an interpolating sesqui-harmonic map, and also to check whether vector fields $X$ that are
critical points of the functional $E_{\delta_{1},\delta_{2}}$ restricted to variations through vector fields, such a vector field is called interpolating sesqui-harmonic vector field.

This paper is organized as follows. Section $2$ contains some basic notions, properties and results that will be needed later. In section $3$, we establish the formula of the field $\tau_{\delta_{1},\delta_{2}}(X)$ of $X:(M, g)\rightarrow (TM,g_{S})$ (see Theorem \ref{Th1}) and provide the conditions which characterize vector fields which are interpolating sesqui-harmonic maps (see Theorem \ref{Th2}). By means of the formula of the field $\tau_{\delta_{1},\delta_{2}}(X)$ of $X$, we derive the first variational formula associated to the energy functional $E_{\delta_{1},\delta_{2}}$ restricted to the space $\mathfrak{X}(M)$ (see Theorem \ref{Th3}). As a corollary, we obtain the critical point condition characterizes interpolating sesqui-harmonic vector field (see Corollary \ref{co1}). Consequently, we obtain the relationship between interpolating sesqui-harmonic vector fields and vector fields which are interpolating sesqui-harmonic maps (see Corollary \ref{co2}). Afterwards, if $\delta_{1}$ and $\delta_{2}$ have the same sign, and $(M,g)$ is compact and oriented, we prove that $X$ is an interpolating sesqui-harmonic vector field (resp. interpolating sesqui-harmonic map) if and only if $X$ is parallel (see Theorem \ref{Th4} and Theorem \ref{Th5}), we present an example of non-parallel vector field which is an interpolating sesqui-harmonic map on the Sol space (see Example \ref{Ex1}). Finally, in section $4$, if $\delta_{1}$ and $\delta_{2}$ have the same sign, and $G$ is a Lie group having a discrete subgroup $\Gamma$ such that the quotient $\Gamma\backslash G$ is compact, we prove that a left-invariant vector field $X$ on $G$ is interpolating sesqui-harmonic (resp. interpolating sesqui-harmonic map) if and only if $X$ is parallel (see Theorem \ref{Th6}). Thereby, when $\delta_{1}$ and $\delta_{2}$ have different signs, we completely determine the set of left-invariant interpolating sesqui-harmonic vector fields and left-invariant vector fields which are interpolating sesqui-harmonic maps on the Heisenberg group $Nil$ (see Theorem \ref{Th7}).

\section{Preliminaries}
We recall here some basic facts on the geometry of tangent bundle. We refer the reader to \cite{dombrowski,kowalski,yano} and references therein for further details. Let $(M,g)$ be a Riemannian manifold of dimension $m$ and $(TM, \pi, M)$ be its tangent
bundle, a local chart $(U, x^{i})_{1\leq i\leq m}$ on $M$ induces a local chart $(\pi^{-1}(U),x^{i},y^{i})_{1\leq
i\leq m}$ on $TM$. Denotes the Christoffel symbols of $g$ by $\Gamma_{jk}^{i}$, the tangent space $T_{(x,u)}TM$ at a point $(x,u)$ in $TM$ is a direct sum of the vertical subspace $\mathcal{V}_{(x,u)}=\ker(d\pi\mid_{(x,u)})$ and the horizontal
subspace $\mathcal{H}_{(x,u)}$, with respect to the Levi-Civita connection $\nabla $ of $M$:
\begin{equation*}
    T_{(x,u)}TM=\mathcal{H}_{(x,u)}\oplus\mathcal{V}_{(x,u)}.
\end{equation*}
Let $X|_{U} = X^{i}\frac{\partial}{\partial x^{i}}$ be a local vector field on $M$. The vertical and
the horizontal lifts of $X$ are defined respectively by:
\begin{equation*}
X^{v}|_{\pi^{-1}(U)}=(X^{i}\circ \pi)\frac{\partial}{\partial y^{i}},
\end{equation*}
and
\begin{equation*}
X^{h}|_{\pi^{-1}(U)}=(X^{i}\circ \pi)\frac{\partial}{\partial x^{i}}%
-(\Gamma_{jk}^{i}\circ \pi) (X^{j}\circ \pi)y^{k}\frac{\partial}{\partial
y^{i}}.
\end{equation*}
The Sasaki metric on $TM$ is the Riemannian metric $g_{S}$ defined by
\begin{equation}\label{eq2.1}
g_{S}(X^{h},Y^{h})=g_{S}(X^{v},Y^{v})=g(X,Y)\circ \pi, \;
g_{S}(X^{v},Y^{h})=0,
\end{equation}
for any $X,Y\in\Gamma(TM)$.

A vector field $X$ on $(M,g)$ can be viewed as the immersion
$X:(M,g)\rightarrow (TM,g_{S}): x\mapsto (x,X_{x})\in TM$ into its tangent bundle $TM$ equipped with the
Sasaki metric $g_{S}$. If $Y\in\Gamma(TM)$ then, we have (see \cite[pp. 50]{dragomir})
\begin{equation}\label{eq2.2}
dX(Y)=\{Y^{h}+(\nabla_{Y}X)^{v}\}\circ X.
\end{equation}
The tension field $\tau(X)$ is given by \cite{medrano} and rewritten in \cite{markellos1} as follows:
 \begin{equation}\label{eq2.3}
    \tau(X)=(-S(X))^{h}+( -\bar{\Delta}X)^{v},
\end{equation}
where $S(X)=\overset{m}{\underset{i=1}{\sum}}R(\nabla_{e_{i}}X,X)e_{i}$, here $R$ denotes the Riemannian curvature tensor taken with the sign convention
\begin{equation*}
    R(X,Y)Z=\nabla_{X}\nabla_{Y}Z-\nabla_{Y}\nabla_{X}Z-\nabla_{[X,Y]}Z,
\end{equation*}
for all vector fields $X$, $Y$ and $Z$ on $M$, and $\bar{\Delta}X$ is the rough Laplacian given by $\bar{\Delta}X=-tr_{g}(\nabla^{2}X)=
\overset{m}{\underset{i=1}{\sum}}(\nabla_{\nabla_{e_{i}}e_{i}}X-\nabla_{e_{i}}\nabla_{e_{i}}X).$ A vector field $X$ defines a harmonic map from $(M,g)$ to $(TM,g_{S} )$ if and only if $\tau(X)=0$, equivalently $\bar{\Delta}X=0$ and $S(X)=0$, $X$ is called harmonic vector field if it is a critical point of the energy functional \eqref{eq1.1}, only considering variations through vector fields. The corresponding Euler-Lagrange equation is given by $\bar{\Delta}X=0,$ so $X$ is a harmonic map if and only if $X$ is a harmonic vector field
and $S(X)=0$.
\section{The interpolating sesqui-harmonicity of vector fields}
In the sequel, we give the formula of the field $\tau_{\delta_{1},\delta_{2}}(X)$ of $X:(M, g)\rightarrow (TM,g_{S})$.
\begin{thm}\label{Th1}
Let $(M, g)$ be an $m$-dimensional Riemannian manifold and $(TM,g_{S})$ its tangent bundle equipped
with the Sasaki metric, if $X:(M, g)\rightarrow (TM,g_{S})$ is a smooth vector field then the field $\tau_{\delta_{1},\delta_{2}}(X)$ of $X$ is given by
\begin{align}\label{eq3.1}
   \tau_{\delta_{1},\delta_{2}}&(X)=
   \Big\{-\delta_{1}\bar{\Delta}X-\delta_{2}\bar{\Delta}\bar{\Delta}X-\delta_{2}
   \sum_{i=1}^{m}[(\nabla_{e_i}R)(e_i,S(X))X
   \nonumber\\
   &+R(e_i,\nabla_{e_i}S(X))X+2R(e_i,S(X))\nabla_{e_i}X]\Big\}^{v}+
   \Big\{-\delta_{1}S(X)-\delta_{2}\bar{\Delta}S(X)\nonumber\\&-
   \delta_{2}R(X,\bar{\Delta}X)S(X)+\delta_{2}\sum_{i=1}^{m}[R(X,\nabla_{e_i}\bar{\Delta}X)e_i
   -R(\nabla_{e_i}X,\bar{\Delta}X)e_i
   \nonumber\\
   &-R(e_i,S(X))e_i-(\nabla_{S(X)}R)(\nabla_{e_i}X,X)e_i+R(X,\nabla_{e_i}X)\nabla_{e_i}S(X)\nonumber\\
   &-  R(X,R(e_i,S(X))X)e_i]\Big\}^{h}.
\end{align}
\end{thm}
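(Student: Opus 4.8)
The plan is to compute $\tau_{\delta_1,\delta_2}(X) = \delta_1\tau(X) + \delta_2\tau_2(X)$ directly from the definition \eqref{eq1.4}, using the known expression \eqref{eq2.3} for the tension field and the formula for the bitension field $\tau_2(X)$ established in \cite{alem}. Since \eqref{eq2.3} already gives $\delta_1\tau(X) = (-\delta_1 S(X))^h + (-\delta_1\bar\Delta X)^v$, which accounts exactly for the two terms $-\delta_1 S(X)$ and $-\delta_1\bar\Delta X$ appearing in \eqref{eq3.1}, essentially all the work is in assembling the $\delta_2\tau_2(X)$ contribution and splitting it into its horizontal and vertical components. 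So I would start by recalling (or re-deriving) the bitension field formula for a vector field: from \cite{alem}, $\tau_2(X) = \Delta^X\tau(X) - \sum_i R^{TM}(\tau(X), dX(e_i))dX(e_i)$, where $\Delta^X$ is the rough Laplacian on $X^{-1}TTM$ and $R^{TM}$ is the curvature of $(TM, g_S)$.

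The core computational steps are then: (i) express $dX(e_i) = \{e_i^h + (\nabla_{e_i}X)^v\}\circ X$ via \eqref{eq2.2}; (ii) using the Kowalski–Dombrowski formulas for the Levi-Civita connection $\nabla^S$ of the Sasaki metric, compute the pullback connection $\nabla^X$ acting on horizontal and vertical lifts composed with $X$, which produces both horizontal and vertical components mixing $\nabla$-derivatives of the base quantities with curvature terms $R(\cdot,\cdot)\cdot$ evaluated along $X$; (iii) iterate this to get $\Delta^X\tau(X)$, starting from $\tau(X) = (-S(X))^h + (-\bar\Delta X)^v$ — here the vertical part $(-\bar\Delta X)^v$ will, upon applying the rough Laplacian twice-over, yield the leading term $-\delta_2\bar\Delta\bar\Delta X$ in the vertical slot plus curvature corrections, while the horizontal part $(-S(X))^h$ contributes $-\delta_2\bar\Delta S(X)$ plus further curvature terms; (iv) compute the curvature term $\sum_i R^{TM}(\tau(X), dX(e_i))dX(e_i)$ using the explicit Sasaki curvature formulas (which involve $R$, $\nabla R$, and quadratic expressions in $R$ evaluated on $X$ and $u = X$), and collect everything. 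The terms $(\nabla_{e_i}R)(e_i, S(X))X$ and $(\nabla_{S(X)}R)(\nabla_{e_i}X, X)e_i$ are the signature of differentiated curvature coming from the non-tensorial part of $\nabla^S$; the quadratic term $R(X, R(e_i,S(X))X)e_i$ comes from the curvature-of-curvature contributions.

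The main obstacle will be bookkeeping: keeping horizontal and vertical lifts straight through two applications of the Sasaki connection, correctly tracking which slot each curvature term lands in, and being careful that the connection coefficients of $g_S$ evaluated at the point $(x, X_x)$ introduce $X$ itself into every curvature argument (so the ``fibre coordinate'' $u$ is replaced by $X$ throughout). A secondary technical point is sign conventions: the paper has already flagged (after \eqref{eq1.4}) that its bitension field differs in sign from \cite{branding1}, so I would fix the convention from \eqref{eq2.3} and the curvature sign convention stated after it, and verify consistency by checking that setting $\delta_2 = 0$ recovers $\tau_{\delta_1,0}(X) = \delta_1\tau(X)$ exactly, and that setting $\delta_1 = 0$, $\delta_2 = 1$ reproduces the bitension field formula of \cite{alem}.

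To carry this out cleanly I would work in a geodesic frame $\{e_i\}$ at the point of evaluation (so $\nabla_{e_i}e_j = 0$ there), which kills the $\nabla_{\nabla_{e_i}e_i}$ terms pointwise and drastically shortens the intermediate expressions; the rough Laplacian then reads $\bar\Delta = -\sum_i \nabla_{e_i}\nabla_{e_i}$ at that point, and similarly for $\Delta^X$ on the pullback bundle. After expanding all four pieces ($\delta_2\bar\Delta\bar\Delta X$-type, $\delta_2\bar\Delta S(X)$-type, the curvature corrections from the connection, and the Sasaki-curvature term), I would group terms by their target slot ($v$ versus $h$) and match against \eqref{eq3.1}; the harmonic and biharmonic special cases then serve as the final consistency check.
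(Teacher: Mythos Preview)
Your approach is correct and matches the paper's: combine the definition \eqref{eq1.4} with the known tension field \eqref{eq2.3} and the bitension field formula from \cite{alem}, then read off the horizontal and vertical parts. The paper's proof is in fact a single sentence citing Theorem~3.1 of \cite{alem} for $\tau_2(X)$, so your extended discussion of how to re-derive that formula via the Sasaki connection and curvature is unnecessary here --- the result is quoted, not recomputed.
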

\begin{proof}On making use of \eqref{eq1.4} and \eqref{eq2.3} and Theorem 3.1 in \cite{alem}, we find easily the formula \eqref{eq3.1}.
\end{proof}
Then, we deduce
\begin{thm}\label{Th2}
Let $(M, g)$ be an $m$-dimensional Riemannian manifold and $X\in\mathfrak{X}(M)$, then $X:(M, g)\rightarrow (TM,g_{S})$ is an interpolating sesqui-harmonic map if and only if
\begin{multline*}
\delta_{1}\bar{\Delta}X+\delta_{2}\bar{\Delta}\bar{\Delta}X+\delta_{2}
\sum_{i=1}^{m}[(\nabla_{e_i}R)(e_i,S(X))X \\
+R(e_i,\nabla_{e_i}S(X))X+2R(e_i,S(X))\nabla_{e_i}X]=0,
\end{multline*}
and
\begin{multline*}
   \delta_{1}S(X)+\delta_{2}\bar{\Delta}S(X)+\delta_{2}R(X,\bar{\Delta}X)S(X)-\delta_{2}
   \sum_{i=1}^{m}[R(X,\nabla_{e_i}\bar{\Delta}X)e_i \\-R(\nabla_{e_i}X,\bar{\Delta}X)e_i-R(e_i,S(X))e_i
   -(\nabla_{S(X)}R)(\nabla_{e_i}X,X)e_i \\+R(X,\nabla_{e_i}X)\nabla_{e_i}S(X)-
   R(X,R(e_i,S(X))X)e_i]=0,\nonumber
\end{multline*}
where $\{e_{i}\}_{i=1}^{m}$ is a local orthonormal frame field of $(M,g)$.
\end{thm}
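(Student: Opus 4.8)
The plan is to deduce Theorem \ref{Th2} directly from the formula for $\tau_{\delta_{1},\delta_{2}}(X)$ established in Theorem \ref{Th1}. By definition, $X:(M,g)\to(TM,g_S)$ is an interpolating sesqui-harmonic map precisely when it is a critical point of the functional $E_{\delta_{1},\delta_{2}}$, which by \eqref{eq1.4} is equivalent to the vanishing of the field $\tau_{\delta_{1},\delta_{2}}(X)$. So the proof amounts to extracting the vanishing condition from \eqref{eq3.1}.

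The key observation is that \eqref{eq3.1} expresses $\tau_{\delta_{1},\delta_{2}}(X)$ as a sum of a vertical lift and a horizontal lift. First I would recall that, for any point $(x,u)\in TM$, the decomposition $T_{(x,u)}TM=\mathcal{H}_{(x,u)}\oplus\mathcal{V}_{(x,u)}$ is a direct sum, and that the lifting maps $Z\mapsto Z^{h}$ and $Z\mapsto Z^{v}$ are injective (indeed, by \eqref{eq2.1} they are isometries onto the horizontal and vertical subspaces respectively). Consequently, a vector field of the form $A^{v}+B^{h}$ vanishes identically if and only if both $A=0$ and $B=0$ as vector fields along $X$. Applying this to \eqref{eq3.1}, the condition $\tau_{\delta_{1},\delta_{2}}(X)=0$ splits into the vanishing of the vertical component and the vanishing of the horizontal component, which are exactly the two displayed equations in the statement (after multiplying through by $-1$). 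The frame-dependent sums appearing in those equations are, of course, independent of the choice of local orthonormal frame $\{e_i\}$, since they arise as traces.

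There is essentially no obstacle here: the content is entirely contained in Theorem \ref{Th1}, and the present theorem is a routine corollary obtained by separating horizontal and vertical parts. The only point requiring a word of care is the justification that the two lifts are linearly independent at every point — but this is immediate from the definition of the Sasaki metric in \eqref{eq2.1}, where $g_S(X^v,Y^h)=0$ and $g_S(X^h,Y^h)=g_S(X^v,Y^v)=g(X,Y)\circ\pi$, so a nonzero $A$ gives a nonzero $A^v$ and likewise for $A^h$. Hence the proof reduces to the single sentence: by Theorem \ref{Th1}, $\tau_{\delta_{1},\delta_{2}}(X)=0$ if and only if the vertical and horizontal components in \eqref{eq3.1} both vanish, which yields the stated system.
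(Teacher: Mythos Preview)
Your proposal is correct and matches the paper's approach: the paper simply writes ``Then, we deduce'' before stating Theorem~\ref{Th2}, treating it as an immediate consequence of Theorem~\ref{Th1} via the horizontal/vertical splitting, exactly as you do. Your additional justification that $A^{v}+B^{h}=0$ forces $A=B=0$ is a welcome clarification of a step the paper leaves implicit.
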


Let $(M,g)$ be a compact
$m$-dimensional Riemannian manifold. The energy functional $E_{\delta_{1},\delta_{2}}$ of $X$ is
defined to be the energy of the corresponding map $X: (M,g)\longrightarrow (TM,g_{S})$. More
precisely, combining relations \eqref{eq2.1}, \eqref{eq2.2} and \eqref{eq2.3}, one obtain
\begin{align*}
    E_{\delta_{1},\delta_{2}}(X)&=\delta_{1}\int_{M}\left\|dX\right\|^{2}v_{g}+
    \delta_{2}\int_{M}\left\|\tau(X)\right\|^{2}v_{g},\nonumber\\
   &=\delta_{1}m\;Vol(M)+\delta_{1}\int_M\left\|\nabla X\right\|^{2}v_g+\delta_{2}\int_M\big[\left\|S(X) \right\|^{2}+\left\|\bar{\Delta}X\right\|^{2}\big] v_g.\nonumber
\end{align*}
\begin{defn}
Let $(M,g)$ be a Riemannian manifold. A vector field $X\in\mathfrak{X}(M)$ is called interpolating sesqui-harmonic
if the corresponding map $X: (M,g)\longrightarrow (TM,g_{S})$ is a critical point for the
functional $E_{\delta_{1},\delta_{2}}$, only considering variations through vector fields.
\end{defn}
In what follows, we determine the first variational
formula of the functional $E_{\delta_{1},\delta_{2}}$ restricted to the space $\mathfrak{X}(M)$. We prove the following theorem:
\begin{thm}\label{Th3}
Let $(M, g)$ be a compact oriented $m$-dimensional Riemannian manifold, $\{e_{i}\}_{i=1}^{m}$ a local orthonormal
frame field of $(M, g)$, $X$ a tangent vector field on
$M$ and $E_{\delta_{1},\delta_{2}}:\mathfrak{X}(M)\longrightarrow [0,+\infty)$ the functional $E_{\delta_{1},\delta_{2}}$ restricted to the space
of all vector fields. Then
\begin{align}\label{S1}
    \frac{d}{dt} E_{\delta_{1},\delta_{2}}(X_{t})\bigg|_{t=0}=&\int_{M} \Big\{g(2\delta_{1}\bar{\Delta}X+2\delta_{2}\bar{\Delta}\bar{\Delta}X+2
    \delta_{2}\sum_{i=1}^{m}\big[(\nabla_{e_i}R)(e_i,S(X))X\nonumber\\
   &+R(e_i,\nabla_{e_i}S(X))X
   +2R(e_i,S(X))\nabla_{e_i}X\big],V)\Big\}v_{g},
\end{align}
for any smooth $1$-parameter variation $U:M\times (-\epsilon,\epsilon)\rightarrow TM$ of $X$ through vector fields i.e., $X_{t}(z)=U(z,t)\in T_{z}M$ for any $|t|<\epsilon$ and $z \in M$, or equivalently $X_{t}\in \mathfrak{X}(M)$
for any $|t|<\epsilon$. Also, $V$ is the tangent vector field on $M$ given by
\begin{equation*}
V(z)=\frac{d}{dt}X_{z}(0) ,   \quad z\in M,
\end{equation*}
where $X_{z}(t)=U(z,t), \,(z,t)\in M\times (-\epsilon,\epsilon)$.
\end{thm}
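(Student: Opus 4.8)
The plan is to differentiate the explicit expression for $E_{\delta_{1},\delta_{2}}(X)$ obtained above, term by term, at $t=0$. Because the Levi-Civita connection $\nabla$ of $M$ does not depend on $t$, the operator $\frac{d}{dt}$ commutes with each $\nabla_{e_{i}}$ and hence with the rough Laplacian $\bar{\Delta}$, while $\frac{d}{dt}X_{t}\big|_{t=0}=V$; the constant term $\delta_{1}m\,Vol(M)$ contributes nothing. At each point of $M$ I will use a local orthonormal frame $\{e_{i}\}$ with $\nabla_{e_{i}}e_{j}=0$ at that point, and I will repeatedly apply the divergence theorem on the compact oriented manifold $(M,g)$ to discard exact terms.

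For the first-order part, $\frac{d}{dt}\int_{M}\|\nabla X_{t}\|^{2}v_{g}\big|_{t=0}=2\int_{M}\sum_{i}g(\nabla_{e_{i}}X,\nabla_{e_{i}}V)\,v_{g}$; rewriting $\sum_{i}g(\nabla_{e_{i}}X,\nabla_{e_{i}}V)$ as a divergence minus $\sum_{i}g(\nabla_{e_{i}}\nabla_{e_{i}}X,V)$ and recalling $\bar{\Delta}X=\sum_{i}(\nabla_{\nabla_{e_{i}}e_{i}}X-\nabla_{e_{i}}\nabla_{e_{i}}X)$ gives the contribution $2\delta_{1}\int_{M}g(\bar{\Delta}X,V)\,v_{g}$. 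The same manipulation applied to $\delta_{2}\int_{M}\|\bar{\Delta}X_{t}\|^{2}v_{g}$ produces $2\int_{M}g(\bar{\Delta}X,\bar{\Delta}V)\,v_{g}$, and the self-adjointness of $\bar{\Delta}$ on the compact manifold yields the contribution $2\delta_{2}\int_{M}g(\bar{\Delta}\bar{\Delta}X,V)\,v_{g}$.

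The term $\delta_{2}\int_{M}\|S(X_{t})\|^{2}v_{g}$ carries the real work. Differentiating $S(X)=\sum_{i}R(\nabla_{e_{i}}X,X)e_{i}$ gives $\frac{d}{dt}S(X_{t})\big|_{t=0}=\sum_{i}[R(\nabla_{e_{i}}V,X)e_{i}+R(\nabla_{e_{i}}X,V)e_{i}]$, so $\frac{d}{dt}\int_{M}\|S(X_{t})\|^{2}v_{g}\big|_{t=0}=2\int_{M}\sum_{i}[g(S(X),R(\nabla_{e_{i}}V,X)e_{i})+g(S(X),R(\nabla_{e_{i}}X,V)e_{i})]\,v_{g}$. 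By the pair symmetry $g(R(A,B)C,D)=g(R(C,D)A,B)$ the second summand equals $g(R(e_{i},S(X))\nabla_{e_{i}}X,V)$, and the first equals $g(R(e_{i},S(X))\nabla_{e_{i}}V,X)=-g(\nabla_{e_{i}}V,R(e_{i},S(X))X)$ by the skew-symmetry of $R(e_{i},S(X))$. Integrating this last expression by parts and expanding, at the chosen point, $\nabla_{e_{i}}(R(e_{i},S(X))X)=(\nabla_{e_{i}}R)(e_{i},S(X))X+R(e_{i},\nabla_{e_{i}}S(X))X+R(e_{i},S(X))\nabla_{e_{i}}X$ (the term in $\nabla_{e_{i}}e_{i}$ vanishing), then adding the contribution of the second summand, gives exactly $2\int_{M}\sum_{i}g((\nabla_{e_{i}}R)(e_{i},S(X))X+R(e_{i},\nabla_{e_{i}}S(X))X+2R(e_{i},S(X))\nabla_{e_{i}}X,\,V)\,v_{g}$, where the coefficient $2$ on the last curvature term comes from combining the two summands.

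Adding the three contributions yields \eqref{S1}. The only delicate step is the curvature bookkeeping in the $\|S(X)\|^{2}$ term, where one must invoke the symmetries of $R$ together with the identity for $\nabla_{e_{i}}(R(e_{i},S(X))X)$ above; the remaining manipulations are routine integrations by parts on the compact oriented manifold. As a consistency check one may instead observe that a variation through vector fields induces the \emph{vertical} variation field $V^{v}$ along the map $X\colon(M,g)\to(TM,g_{S})$, so that the general first variation formula $\frac{d}{dt}E_{\delta_{1},\delta_{2}}(X_{t})\big|_{t=0}=-2\int_{M}g_{S}(\tau_{\delta_{1},\delta_{2}}(X),V^{v})\,v_{g}$, together with $g_{S}(Y^{h},Z^{v})=0$, retains only the vertical component of $\tau_{\delta_{1},\delta_{2}}(X)$, i.e. the $\{\,\cdot\,\}^{v}$ term in \eqref{eq3.1} of Theorem \ref{Th1}, and this reproduces \eqref{S1} at once.
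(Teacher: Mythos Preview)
Your proof is correct. The main route you take---differentiating the explicit expression $E_{\delta_{1},\delta_{2}}(X)=\delta_{1}m\,\mathrm{Vol}(M)+\delta_{1}\int_{M}\|\nabla X\|^{2}v_{g}+\delta_{2}\int_{M}(\|S(X)\|^{2}+\|\bar{\Delta}X\|^{2})\,v_{g}$ term by term and handling the $\|S(X)\|^{2}$ term via the curvature symmetries and integration by parts---is genuinely different from the paper's argument. The paper instead invokes Branding's general first variation formula $\frac{d}{dt}E_{\delta_{1},\delta_{2}}(X_{t})\big|_{t=0}=-2\int_{M}g_{S}(\mathcal{V},\tau_{\delta_{1},\delta_{2}}(X))\,v_{g}$, identifies the variation field as $\mathcal{V}=V^{v}\circ X$ (citing \cite{dragomir}), and then reads off the result from the vertical component of $\tau_{\delta_{1},\delta_{2}}(X)$ computed in Theorem~\ref{Th1}. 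In other words, the paper's proof is exactly the ``consistency check'' you append at the end. Your direct computation is more self-contained and makes the curvature bookkeeping explicit, at the cost of being longer; the paper's route is shorter but leans on the prior computation of $\tau_{\delta_{1},\delta_{2}}(X)$ and on the first variation formula from \cite{branding1}.
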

\begin{proof}
Let $U:M\times (-\epsilon,\epsilon)\rightarrow TM$ be a smooth variation of $X$( i.e., $U(z,0)=X(z)$ for any
$z\in M$) such that $X_{t}(z)=U(z,t)\in T_{z}M$ for any $z\in M$ and any $|t|<\epsilon$. We have
\begin{equation*}
  E_{\delta_{1},\delta_{2}}(X_{t})= \delta_{1}\int_{M} \left\|dX_{t}\right\|^{2}v_{g}+\delta_{2}\int_{M} \left\|\tau(X_{t})\right\|^{2}v_{g}.
\end{equation*}
And, from \cite{branding1} one has
\begin{equation*}
  \frac{d}{dt} E_{\delta_{1},\delta_{2}}(X_{t})\bigg|_{t=0}= -2\int_{M} g_{S}(\mathcal{V},\tau_{\delta_{1},\delta_{2}}(X))v_{g},
\end{equation*}
where $\mathcal{V}(z)=\frac{d X_{t}(z)}{d t}\big|_{t=0},$
and from \cite[pp. 58]{dragomir}, one obtain
\begin{equation}\label{eq3.4}
  \mathcal{V}=V^{v}\circ X.
\end{equation}
By virtue of \eqref{eq3.4} and the formula of $\tau_{\delta_{1},\delta_{2}}(X)$ given by \eqref{eq3.1}, we get
\begin{align}\label{a}
    \frac{d}{dt} E_{\delta_{1},\delta_{2}}(X_{t})\bigg|_{t=0}=&-2\int_{M} g_{S}(V^{v},\tau_{\delta_{1},\delta_{2}}(X))v_{g}, \nonumber\\
    =&2\int_{M} \Big\{g(V,\delta_{1}\bar{\Delta}X+\delta_{2}\bar{\Delta}\bar{\Delta}X+
    \delta_{2}\sum_{i=1}^{m}\big[(\nabla_{e_i}R)(e_i,S(X))X\nonumber\\
   &+R(e_i,\nabla_{e_i}S(X))X
   +2R(e_i,S(X))\nabla_{e_i}X\big])\Big\}v_{g}.\nonumber
\end{align}
Then, the desired formula follows.
\end{proof}
\begin{rem}
As usual, Theorem \ref{Th3} holds when $(M,g)$ is a non-compact Riemannian manifold. Indeed, if $M$ is non-compact, we can take an open subset $W$ in $M$ whose closure is compact, and take an arbitrary $V$ but, the support of $V$ is contained in $W$ (see Proposition 3 of \cite{medrano}).
\end{rem}
Then, we deduce the following.
\begin{cor}\label{co1}
A vector field $X$ of an $m$-dimensional Riemannian manifold $(M,g)$ is interpolating sesqui-harmonic if and only if
\begin{multline*}
\delta_{1}\bar{\Delta}X+\delta_{2}\bar{\Delta}\bar{\Delta}X+\delta_{2}
\sum_{i=1}^{m}[(\nabla_{e_i}R)(e_i,S(X))X\\+R(e_i,\nabla_{e_i}S(X))X+2R(e_i,S(X))\nabla_{e_i}X]=0,
\end{multline*}
where $\{e_{i}\}_{i=1}^{m}$ is a local orthonormal frame field of $(M,g)$.
\end{cor}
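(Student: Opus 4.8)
The plan is to obtain Corollary \ref{co1} as a direct consequence of the first variational formula established in Theorem \ref{Th3}. By definition, a vector field $X$ is interpolating sesqui-harmonic if and only if the corresponding map $X:(M,g)\to(TM,g_S)$ is a critical point of $E_{\delta_1,\delta_2}$ restricted to the space $\mathfrak{X}(M)$, that is, $\frac{d}{dt}E_{\delta_1,\delta_2}(X_t)|_{t=0}=0$ for every smooth one-parameter variation $X_t$ of $X$ through vector fields. Using the remark following Theorem \ref{Th3}, the same variational formula is valid even when $(M,g)$ is non-compact, so the statement may be phrased for an arbitrary Riemannian manifold by localizing the variation inside a relatively compact open set.

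First I would write, by Theorem \ref{Th3},
\begin{equation*}
\frac{d}{dt}E_{\delta_1,\delta_2}(X_t)\Big|_{t=0}
=2\int_M g\big(A(X),V\big)\,v_g,
\end{equation*}
where
\begin{equation*}
A(X):=\delta_1\bar{\Delta}X+\delta_2\bar{\Delta}\bar{\Delta}X
+\delta_2\sum_{i=1}^m\big[(\nabla_{e_i}R)(e_i,S(X))X
+R(e_i,\nabla_{e_i}S(X))X+2R(e_i,S(X))\nabla_{e_i}X\big],
\end{equation*}
and $V$ is the variational vector field. Since $V$ ranges over all smooth vector fields on $M$ (with compact support, in the non-compact case) as $X_t$ ranges over all admissible variations — concretely one may take $X_t=X+tV$, which is a vector field for every $t$ — the vanishing of the first variation for all such $V$ is equivalent, by the fundamental lemma of the calculus of variations, to $A(X)=0$ pointwise on $M$. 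This is precisely the asserted equation.

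The only point requiring a word of care is the claim that every smooth $V\in\mathfrak{X}(M)$ arises as the variational field of an admissible variation through vector fields; this is immediate because the linear path $X_t(z)=X_z+tV_z$ stays in $T_zM$ for every $z$ and every $t$, so $X_t\in\mathfrak{X}(M)$, and $\frac{d}{dt}X_t|_{t=0}=V$. With this observation the proof is a one-line application of Theorem \ref{Th3} together with the standard argument that a continuous section pairing trivially against all (compactly supported) test sections must vanish. I do not anticipate a genuine obstacle here — the content is entirely contained in Theorem \ref{Th3}; the corollary is just its reformulation as a pointwise Euler--Lagrange equation.

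\begin{proof}
By Theorem \ref{Th3} (together with the preceding remark in the non-compact case), for any smooth variation $X_t$ of $X$ through vector fields with variational field $V\in\mathfrak{X}(M)$ we have
\begin{equation*}
\frac{d}{dt}E_{\delta_1,\delta_2}(X_t)\Big|_{t=0}
=2\int_M g\big(A(X),V\big)\,v_g,
\end{equation*}
where
\begin{equation*}
A(X)=\delta_1\bar{\Delta}X+\delta_2\bar{\Delta}\bar{\Delta}X
+\delta_2\sum_{i=1}^m\big[(\nabla_{e_i}R)(e_i,S(X))X
+R(e_i,\nabla_{e_i}S(X))X+2R(e_i,S(X))\nabla_{e_i}X\big].
\end{equation*}
The vector field $X$ is interpolating sesqui-harmonic precisely when this first variation vanishes for every admissible variation. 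Given any $V\in\mathfrak{X}(M)$ (with compact support if $M$ is non-compact), the path $X_t=X+tV$ is an admissible variation with variational field $V$, so interpolating sesqui-harmonicity is equivalent to $\int_M g(A(X),V)\,v_g=0$ for all such $V$. Since $A(X)$ is a smooth vector field on $M$, the fundamental lemma of the calculus of variations yields $A(X)=0$ on $M$, which is the stated equation. Conversely, if $A(X)=0$ then the first variation vanishes for all variations, so $X$ is interpolating sesqui-harmonic.
\end{proof}
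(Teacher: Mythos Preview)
Your proof is correct and follows exactly the route the paper intends: Corollary~\ref{co1} is stated immediately after Theorem~\ref{Th3} and its accompanying remark with the phrase ``Then, we deduce the following,'' so the paper treats it as the standard passage from the first variational formula to the pointwise Euler--Lagrange equation via the fundamental lemma of the calculus of variations, precisely as you have written it out.
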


A reformulation of Theorem \ref{Th2} is then
\begin{cor}\label{co2}
Let $(M,g)$ be an $m$-dimensional Riemannian manifold and $X\in\mathfrak{X}(M)$. Then
$X$ is an interpolating sesqui-harmonic map of $(M,g)$ to $(TM,g_{S})$ if and only if  $X$ is interpolating sesqui-harmonic vector field and
\begin{multline*}
   \delta_{1}S(X)+\delta_{2}\bar{\Delta}S(X)+\delta_{2}R(X,\bar{\Delta}X)S(X)-\delta_{2}
   \sum_{i=1}^{m}[R(X,\nabla_{e_i}\bar{\Delta}X)e_i\\-R(\nabla_{e_i}X,\bar{\Delta}X)e_i-R(e_i,S(X))e_i
   -(\nabla_{S(X)}R)(\nabla_{e_i}X,X)e_i\\+R(X,\nabla_{e_i}X)\nabla_{e_i}S(X)-
   R(X,R(e_i,S(X))X)e_i]=0.
\end{multline*}
\end{cor}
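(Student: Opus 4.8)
The plan is to derive Corollary \ref{co2} by matching the two conditions already isolated in Theorem \ref{Th2} against the single condition of Corollary \ref{co1}, exploiting the fact that the field $\tau_{\delta_{1},\delta_{2}}(X)$ of Theorem \ref{Th1} splits into a vertical and a horizontal summand which, by \eqref{eq2.1}, are $g_{S}$-orthogonal and on each of which $g_{S}$ is positive definite.

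First I would write $\tau_{\delta_{1},\delta_{2}}(X) = \{A(X)\}^{v} + \{B(X)\}^{h}$, reading off from \eqref{eq3.1} the vertical coefficient $A(X)$ (the bracket carrying $-\delta_{1}\bar{\Delta}X-\delta_{2}\bar{\Delta}\bar{\Delta}X-\cdots$) and the horizontal coefficient $B(X)$ (the remaining bracket). Since $\mathcal{H}_{(x,u)}$ and $\mathcal{V}_{(x,u)}$ are complementary and $g_{S}$ is positive definite on each, the vanishing $\tau_{\delta_{1},\delta_{2}}(X)=0$ is equivalent to the pair $A(X)=0$ and $B(X)=0$; this is exactly the content of Theorem \ref{Th2}, whose first displayed equation is $A(X)=0$ and whose second is $B(X)=0$.

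Next I would recall, from the proof of Theorem \ref{Th3}, that a variation of $E_{\delta_{1},\delta_{2}}$ through vector fields has variation field $\mathcal{V}=V^{v}\circ X$, which is purely vertical by \eqref{eq3.4}; hence pairing $\mathcal{V}$ with $\tau_{\delta_{1},\delta_{2}}(X)$ in $g_{S}$ annihilates the horizontal summand and leaves only a $g$-pairing of $V$ with $A(X)$. Consequently the Euler-Lagrange equation for $E_{\delta_{1},\delta_{2}}$ restricted to $\mathfrak{X}(M)$ is $A(X)=0$, which is precisely Corollary \ref{co1}: $X$ is an interpolating sesqui-harmonic vector field if and only if $A(X)=0$, i.e.\ the first displayed equation in Corollary \ref{co2} holds.

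Finally I would assemble the equivalence: by Theorem \ref{Th2}, $X$ is an interpolating sesqui-harmonic map iff $A(X)=0$ and $B(X)=0$; by Corollary \ref{co1}, $A(X)=0$ iff $X$ is an interpolating sesqui-harmonic vector field; so $X$ is an interpolating sesqui-harmonic map iff it is an interpolating sesqui-harmonic vector field and in addition $B(X)=0$, which is the second displayed equation. There is no genuine analytic obstacle here; the statement is a bookkeeping consequence of the vertical/horizontal splitting in Theorem \ref{Th1}, and the only point needing care is to verify that the first equation of Theorem \ref{Th2} is textually identical to the defining equation of interpolating sesqui-harmonic vector fields in Corollary \ref{co1} — which it is, by inspection. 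All the substantive computation was already spent in establishing Theorems \ref{Th1}--\ref{Th3}.
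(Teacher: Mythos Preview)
Your proposal is correct and matches the paper's approach: the paper simply states that Corollary \ref{co2} is ``a reformulation of Theorem \ref{Th2}'' with no further argument, and your write-up spells out exactly that reformulation---identifying the vertical condition of Theorem \ref{Th2} with the criterion of Corollary \ref{co1} and keeping the horizontal condition as the extra equation. If anything, you have supplied more justification than the paper itself does.
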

In the following Theorem, when $\delta_{1}$ and $\delta_{2}$ have the same sign, we study the condition under of which a vector field $X$ of a Riemannian manifold $(M,g)$ is interpolating sesqui-harmonic under the assumption that the base manifold $(M,g)$ is compact. In particular, we have
\begin{thm}\label{Th4}
Let $(M,g)$ be a compact oriented $m$-dimensional Riemannian manifold and $X\in\mathfrak{X}(M)$ a vector field. Assuming that $\delta_{1}$ and $\delta_{2}$ have the same sign. Then, $X: (M,g)\longrightarrow (TM,g_{S})$ is  interpolating sesqui-harmonic vector field if and only if $X$ is parallel.
\end{thm}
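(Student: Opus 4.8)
The plan is to exploit the first variational formula of Theorem \ref{Th3} together with an integration-by-parts argument that turns the critical point condition of Corollary \ref{co1} into an $L^2$-identity forcing $\nabla X=0$. Recall from Corollary \ref{co1} that $X$ is interpolating sesqui-harmonic if and only if
\[
\delta_{1}\bar{\Delta}X+\delta_{2}\bar{\Delta}\bar{\Delta}X+\delta_{2}\sum_{i=1}^{m}\big[(\nabla_{e_i}R)(e_i,S(X))X+R(e_i,\nabla_{e_i}S(X))X+2R(e_i,S(X))\nabla_{e_i}X\big]=0.
\]
The sufficiency is immediate: if $X$ is parallel then $\nabla X=0$, hence $\bar{\Delta}X=0$ and $S(X)=\sum_i R(\nabla_{e_i}X,X)e_i=0$, and every term above vanishes. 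So the whole content is the necessity direction.

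For necessity, the key idea is to take the $L^2$ inner product of the Euler--Lagrange expression against $X$ itself and integrate over the compact oriented manifold $M$. First I would pair the equation with $X$ and integrate; using the self-adjointness of the rough Laplacian $\bar\Delta$ with respect to the $L^2$ product (valid since $M$ is compact and oriented, so $\int_M \operatorname{div}(\cdot)\,v_g=0$), the term $\delta_1\int_M g(\bar\Delta X,X)\,v_g$ becomes $\delta_1\int_M\|\nabla X\|^2 v_g$, and $\delta_2\int_M g(\bar\Delta\bar\Delta X,X)\,v_g=\delta_2\int_M g(\bar\Delta X,\bar\Delta X)\,v_g=\delta_2\int_M\|\bar\Delta X\|^2 v_g$. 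For the curvature terms I would recognize, using the first Bianchi identity and the symmetries $g(R(A,B)C,D)=-g(R(A,B)D,C)=g(R(C,D)A,B)$, that the sum
\[
\sum_{i=1}^{m} g\big((\nabla_{e_i}R)(e_i,S(X))X+R(e_i,\nabla_{e_i}S(X))X+2R(e_i,S(X))\nabla_{e_i}X,\,X\big)
\]
is (up to an exact divergence, produced by moving a $\nabla_{e_i}$ off via integration by parts) equal to $\|S(X)\|^2$; indeed $S(X)=\sum_i R(\nabla_{e_i}X,X)e_i$, so $\sum_i g(R(e_i,S(X))\nabla_{e_i}X,X) = \sum_i g(R(\nabla_{e_i}X,X)e_i,S(X)) = \|S(X)\|^2$ after using curvature symmetries, and the remaining two terms reorganize into a total divergence plus a matching multiple of $\|S(X)\|^2$ once one integrates the $\nabla_{e_i}R$ and $\nabla_{e_i}S(X)$ terms by parts. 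The upshot is an identity of the form
\[
\int_M\Big(\delta_1\|\nabla X\|^2+\delta_2\|\bar\Delta X\|^2+\delta_2 c\,\|S(X)\|^2\Big)v_g=0
\]
for some positive constant $c$ (I expect $c=1$ after the bookkeeping). Since $\delta_1$ and $\delta_2$ have the same sign, all three integrands carry that common sign, so each must vanish identically; in particular $\|\nabla X\|^2\equiv0$ (if $\delta_1\neq0$) — and if $\delta_1=0$ one still gets $\bar\Delta X\equiv0$, and then harmonicity-type arguments (the standard Bochner/Weitzenböck step used for harmonic vector fields, cf. \cite{ishihara,nouhaud}) or simply $\int_M g(\bar\Delta X,X)=\int_M\|\nabla X\|^2=0$ again gives $\nabla X=0$. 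Either way $X$ is parallel.

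The main obstacle is the precise algebraic verification that the three curvature terms, after integration by parts, collapse to a nonnegative multiple of $\int_M\|S(X)\|^2 v_g$ with no stray sign-indefinite remainder. This requires careful use of the contracted second Bianchi identity for $(\nabla_{e_i}R)(e_i,\cdot)$, the pairing symmetries of $R$, and tracking which derivatives land on $X$ versus on $S(X)$; one must confirm that the ``bad'' cross terms either cancel among themselves or assemble into a genuine divergence $\operatorname{div}(\omega)$ whose integral vanishes by Stokes. A clean way to organize this is to observe that $\int_M g(\delta_2\bar\Delta\bar\Delta X+\delta_2(\text{curvature terms}),X)\,v_g$ is, by Theorem \ref{Th1} and the structure of $\tau_2$, exactly $\delta_2\int_M\|\tau(X)\|^2 v_g$ up to the harmonic-vector-field computation in \cite{markellos1} — so I would lean on the already-established biharmonic identity $\int_M g(\tau_2(X),X)\,v_g=\int_M(\|S(X)\|^2+\|\bar\Delta X\|^2)\,v_g$ rather than redo the Bianchi manipulations from scratch, reducing the proof to combining that known fact with the elementary $\delta_1$-term computation.
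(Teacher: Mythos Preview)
Your proposal is correct and follows essentially the same route as the paper. The paper phrases the step ``pair the Euler--Lagrange expression with $X$ and integrate'' variationally, by plugging the explicit variation $X_t=(1+t)X$ (so $V=X$) into the first variational formula \eqref{S1}; this is exactly your $L^2$-pairing. For the curvature block the paper likewise defers to the computation in \cite{markellos1} (their Theorem~3.4), precisely as you suggest at the end, arriving at
\[
0=2\delta_{1}\!\int_M\|\nabla X\|^2 v_g+2\delta_{2}\!\int_M\|\bar{\Delta}X\|^2 v_g+4\delta_{2}\!\int_M\|S(X)\|^2 v_g,
\]
so the constant you anticipate is $c=2$ rather than $c=1$.
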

\begin{proof} We assume that $X$ is an interpolating sesqui-harmonic vector field i.e. critical point of the energy functional $E_{\delta_{1},\delta_{2}}$ restricted to the space $\mathfrak{X}(M)$. We consider the smooth 1-parameter variation $X_{t}=(1+t)X$ of $X$ ($|t|<\epsilon$). By the Lemma 2.15 in \cite{dragomir} we have
\begin{equation*}
    g(\bar{\Delta}X,X)=\frac{1}{2}\Delta(\left\|X\right\|^{2})+\left\|\nabla X\right\|^{2},
\end{equation*}
where $\Delta$ is the Laplace-Betrami operator on functions. Applying the divergence Theorem for the function $\left\|X\right\|^{2}$, we get
\begin{equation}\label{a4}
    \int_{M}g(\bar{\Delta}X,X)v_{g}=\int_{M}\left\|\nabla X\right\|^{2}v_{g}.
\end{equation}
Thus, using \eqref{S1} and \eqref{a4} and the proof of Theorem 3.4 in \cite{markellos1} we deduce that
\begin{align}\label{a5}
    0&=\frac{d}{dt} E_{\delta_{1},\delta_{2}}(X_{t})\bigg|_{t=0}=
    2\delta_{1}\int_{M}g(\bar{\Delta}X,X)v_{g}+2\delta_{2}\int_{M}g(\bar{\Delta}X,\bar{\Delta}X)v_{g}
    \nonumber\\&\quad +4\delta_{2}\int_{M}g(S(X),S(X))v_{g}\nonumber\\
    &=2\delta_{1}\int_{M} \left\|\nabla X\right\|^{2}v_{g}+2\delta_{2}\int_{M}\left\|\bar{\Delta}X\right\|^{2}v_{g}
    +4\delta_{2}\int_{M}\left\|S(X)\right\|^{2}v_{g}\nonumber.
\end{align}
Since $\delta_{1}$ and $\delta_{2}$ have the same sign and both functions $\left\|\nabla X\right\|^{2}$, $\left\|\bar{\Delta}X\right\|^{2}$,
$\left\|S(X)\right\|^{2}$ are positive, we conclude that $\nabla X=0$ i.e. $X$ is parallel. Conversely, we assume that the vector field $X$ is parallel, by virtue of Corollary \ref{co1}, $X$ is an interpolating sesqui-harmonic vector field.
\end{proof}
\begin{thm}\label{Th5}
Let $(M,g)$ be a compact oriented $m$-dimensional Riemannian manifold and $X\in\mathfrak{X}(M)$ a vector field. Assuming that $\delta_{1}$ and $\delta_{2}$ have the same sign, then $X: (M,g)\longrightarrow (TM,g_{S})$ is an interpolating sesqui-harmonic map if and only if $X$ is parallel.
\end{thm}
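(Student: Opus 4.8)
The plan is to reduce Theorem~\ref{Th5} to the already established Theorem~\ref{Th4} together with the second (horizontal) component of the characterization in Corollary~\ref{co2}. By Corollary~\ref{co2}, $X$ is an interpolating sesqui-harmonic map if and only if $X$ is an interpolating sesqui-harmonic vector field \emph{and} the horizontal equation holds. The forward direction is then immediate: if $X$ is an interpolating sesqui-harmonic map, then in particular it is an interpolating sesqui-harmonic vector field, so Theorem~\ref{Th4} (using that $\delta_{1},\delta_{2}$ have the same sign and $M$ is compact oriented) forces $\nabla X=0$, i.e.\ $X$ is parallel. Conversely, if $X$ is parallel, then $\nabla X=0$, hence $S(X)=\sum_i R(\nabla_{e_i}X,X)e_i=0$, $\bar{\Delta}X=0$, and every term in both equations of Theorem~\ref{Th2} vanishes term by term (each summand contains a factor $\nabla X$, $S(X)$, $\bar{\Delta}X$, or a covariant derivative thereof); therefore $X$ is an interpolating sesqui-harmonic map.

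Concretely, I would write the proof as follows. First I would recall from Theorem~\ref{Th4} that, under the stated hypotheses, being an interpolating sesqui-harmonic vector field is equivalent to being parallel. Then I would invoke Corollary~\ref{co2}: an interpolating sesqui-harmonic map is \emph{a fortiori} an interpolating sesqui-harmonic vector field, so by Theorem~\ref{Th4} it is parallel. This disposes of the ``only if'' direction without any computation. For the ``if'' direction, I would verify directly that a parallel $X$ satisfies the two equations of Theorem~\ref{Th2}: since $\nabla X=0$ we get $\bar{\Delta}X=\sum_i(\nabla_{\nabla_{e_i}e_i}X-\nabla_{e_i}\nabla_{e_i}X)=0$ and $S(X)=\sum_i R(\nabla_{e_i}X,X)e_i=0$, and consequently $\nabla_{e_i}S(X)=0$, $\nabla_{e_i}\bar{\Delta}X=0$ as well; substituting into both displayed equations of Theorem~\ref{Th2} makes every summand vanish, so the characterization is satisfied and $X$ is an interpolating sesqui-harmonic map.

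There is essentially no serious obstacle here, since the heavy lifting (the first variational formula, the vanishing argument exploiting that $\delta_1,\delta_2$ have the same sign, and the identity $\int_M g(\bar{\Delta}X,X)\,v_g=\int_M\|\nabla X\|^2\,v_g$) is already carried out in the proof of Theorem~\ref{Th4}. The only point requiring a line of care is making sure the logical chain is stated in the right order: one must use Corollary~\ref{co2} to extract ``interpolating sesqui-harmonic vector field'' as a \emph{necessary} condition for being an interpolating sesqui-harmonic map, and only then apply Theorem~\ref{Th4}; the converse cannot be obtained merely from Theorem~\ref{Th4} but needs the explicit (trivial) check that the horizontal equation also holds when $X$ is parallel. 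If one wished to avoid even that short computation, one could instead note that a parallel vector field is a totally geodesic isometric immersion into $(TM,g_S)$, hence harmonic, hence biharmonic, hence interpolating sesqui-harmonic by \eqref{eq1.4}; but the term-by-term vanishing argument via Theorem~\ref{Th2} is shorter and self-contained.
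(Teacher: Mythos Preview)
Your proof is correct and follows essentially the same route as the paper: for the forward direction the paper also observes that an interpolating sesqui-harmonic map is in particular an interpolating sesqui-harmonic vector field (it cites Corollary~\ref{co1} rather than Corollary~\ref{co2}, but the content is the same) and then applies Theorem~\ref{Th4}; for the converse it likewise invokes Theorem~\ref{Th2} to conclude that a parallel $X$ satisfies both equations. Your write-up is more detailed in spelling out why each term vanishes when $\nabla X=0$, but the underlying argument is identical.
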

\begin{proof}
We assume that $X: (M,g)\longrightarrow (TM,g_{S})$ is an interpolating sesqui-harmonic map, then from Corollary \ref{co1}, $X$ is an interpolating sesqui-harmonic vector field, hence by Theorem \ref{Th4} $X$ is parallel. Conversely, we suppose that the vector field $X$ is parallel, by virtue of Theorem \ref{Th2}, $X$ is an interpolating sesqui-harmonic map.
\end{proof}
\begin{ex}\label{Ex1}
Consider the Sol space as the Cartesian $3$-space $\mathbb{R}^{3}(x,y,z)$ equipped with the metric $g_{Sol}=e^{2z}(dx)^{2}+e^{-2z}(dy)^{2}+(dz)^{2}$ and the orthonormal frame field
$\{e_{1}=e^{-z}\frac{\partial}{\partial x},e_{2}=e^{z}\frac{\partial}{\partial y},e_{3}=\frac{\partial}{\partial z}\}.$ We consider the vector field $X=f(z)e_{3}$, where $f(z)$ is a smooth real function depending of the variable
$z$. Drawing on computations from \cite{alem}, we obtain that $X=f(z)e_{3}$ is an interpolating sesqui-harmonic map if and only if the function $f$ satisfies the following homogeneous fourth order differential equation. 
\begin{equation}\label{eq3.39}
    \delta_{2}f''''-(\delta_{1}+4\delta_{2})f''+(2\delta_{1}+4\delta_{2})f=0.
\end{equation}
If $\frac{\delta_{1}+2\delta_{2}}{\delta_{2}}>0$, the general solution of \eqref{eq3.39} is 
\begin{equation}\label{eq3.40}
    f(z)=c_{1}e^{\sqrt{2}z}+c_{2}e^{-\sqrt{2}z}+
    c_{3}e^{\sqrt{\frac{\delta_{1}+2\delta_{2}}{\delta_{2}}}z}
    +c_{4}e^{-\sqrt{\frac{\delta_{1}+2\delta_{2}}{\delta_{2}}}z},
\end{equation}
where $c_{1},c_{2},c_{3}$ and $c_{4}$ are real constants. In particular, $X=f(z)e_{3}$ is also interpolating sesqui-harmonic vector
field, where $f(z)$ is given by \eqref{eq3.40}.
\end{ex}
\section{Interpolating sesqui-harmonicity of vector fields on Lie groups}
In this section, we investigate the interpolating sesqui-harmonicity of left-invariant vector fields on Lie groups.
We know that the action of any discrete subgroup $\Gamma$ of a Lie group $G$ by left translations is free
and properly discontinuous. Consequently, the set of orbits, that is, the space of right cosets $\Gamma\backslash G$,  is a $C^{\infty}$-manifold and the naturel projection $\pi:G\longrightarrow \Gamma\backslash G$, applying each $x$ to its orbit $\Gamma x$, is a $C^{\infty}$-mapping (see \cite{boothby}). Furthermore, each left-invariant vector field on $G$ descends to $\Gamma\backslash G$, or equivalently, if $X$ is left-invariant, then $\pi_{\ast}X_{ba}=\pi_{\ast}X_{a}$, for all $a\in G$ and $b\in \Gamma$, we have also each
left-invariant metric on $G$ and, in general, all its left-invariant tensor fields, descend to the quotient space $\Gamma\backslash G$ (see \cite{gonzalez}). Thereby, $\Gamma\backslash G$ is a Riemannian manifold with the same curvature properties for the curvature tensor as on $G$ (see \cite{gonzalez}). Thus, we deduce that the projections of left-invariant vector fields preserve the properties to be harmonic, biharmonic, interpolating sesqui-harmonic and to determine harmonic maps, biharmonic maps and interpolating sesqui-harmonic maps.

Now, we focus on the case of compact $\Gamma\backslash G$. It should be noted that a necessary and sufficient
condition for compactness is the existence of a compact subset $K\subset G$ whose $\Gamma$-orbits cover $G$, that is, $\Gamma K=G$ (see \cite{gonzalez}). In particular, any three-dimensional unimodular Lie group $G$ admits a discrete subgroup $\Gamma$ such that $\Gamma\backslash G$ is compact and conversely (see \cite{milnor}).

\begin{thm}\label{Th6}
Let $G$ be a Lie group having a discrete subgroup $\Gamma$ such that $\Gamma\backslash G$ is compact. Assuming that $\delta_{1}$ and $\delta_{2}$ have the same sign, then a left-invariant vector field $X$ on $G$ is interpolating sesqui-harmonic (resp. interpolating sesqui-harmonic map) if and only if $X$ is parallel.
\end{thm}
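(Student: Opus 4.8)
The plan is to reduce the statement on $G$ to the compact setting already handled in Theorems~\ref{Th4} and~\ref{Th5}. Since $\Gamma\backslash G$ is compact and the natural projection $\pi:G\longrightarrow\Gamma\backslash G$ is a Riemannian submersion with discrete fibres (a local isometry), a left-invariant vector field $X$ on $G$ descends to a well-defined vector field $\bar X$ on $\Gamma\backslash G$ with $\pi_{\ast}X=\bar X\circ\pi$, as recalled in the paragraph preceding the theorem. The key observation is that, because $\pi$ is a local isometry, all the metric quantities entering Corollary~\ref{co1} and Corollary~\ref{co2} -- namely $\nabla X$, $\bar\Delta X$, $S(X)$ and the curvature expressions built from $R$ -- are intertwined by $\pi_{\ast}$: one has $\pi_{\ast}(\bar\Delta X)=\bar\Delta\bar X$, $\pi_{\ast}(S(X))=S(\bar X)$, $\pi_{\ast}(\nabla_{e_i}X)=\bar\nabla_{\bar e_i}\bar X$, and $\pi_{\ast}\big(R(\cdot,\cdot)\cdot\big)=\bar R(\pi_\ast\cdot,\pi_\ast\cdot)\pi_\ast\cdot$, since Levi-Civita connections and curvature tensors are preserved by local isometries. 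Hence $X$ is interpolating sesqui-harmonic (resp.\ an interpolating sesqui-harmonic map) on $G$ if and only if $\bar X$ is interpolating sesqui-harmonic (resp.\ an interpolating sesqui-harmonic map) on $\Gamma\backslash G$, and likewise $X$ is parallel on $G$ if and only if $\bar X$ is parallel on $\Gamma\backslash G$ (parallelism being a local condition preserved by local isometries, using that $\pi_{\ast}$ is a fibrewise linear isomorphism).

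With this correspondence in hand, the argument is immediate. First suppose $X$ is interpolating sesqui-harmonic (resp.\ an interpolating sesqui-harmonic map) on $G$. Then $\bar X$ has the same property on the compact oriented manifold $\Gamma\backslash G$; note $\Gamma\backslash G$ is orientable because $G$, being a Lie group, is parallelizable, hence orientable, and the orientation is $\Gamma$-invariant (left translations preserve a fixed left-invariant volume form). Since $\delta_{1}$ and $\delta_{2}$ have the same sign, Theorem~\ref{Th4} (resp.\ Theorem~\ref{Th5}) applies and gives $\bar\nabla\bar X=0$, i.e.\ $\bar X$ is parallel; pulling back, $X$ is parallel on $G$. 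Conversely, if $X$ is parallel on $G$ then $\nabla_{e_i}X=0$ for all $i$, so $\bar\Delta X=0$, $S(X)=0$, and every term in the equations of Corollary~\ref{co1} and Corollary~\ref{co2} vanishes; hence $X$ is simultaneously an interpolating sesqui-harmonic vector field and an interpolating sesqui-harmonic map. (Alternatively, one may invoke Theorem~\ref{Th4} and Theorem~\ref{Th5} directly through the descended field $\bar X$.)

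The main obstacle is not a deep one but a bookkeeping matter: one must carefully justify that $\pi$ being a local isometry really does intertwine \emph{all} the ingredients of the tension-field and bitension-field formulae -- in particular the higher-order terms $(\nabla_{e_i}R)(e_i,S(X))X$, $\bar\Delta\bar\Delta X$, and the curvature-composition terms such as $R(X,R(e_i,S(X))X)e_i$ -- so that the two coupled PDE systems transform into one another under $\pi_{\ast}$. This follows because a local isometry commutes with $\nabla$, with $R$, and with iterated covariant derivatives of $R$, and because an adapted local orthonormal frame downstairs lifts to one upstairs; but it should be spelled out that left-invariance of $X$ guarantees the descended field $\bar X$ is globally well-defined and smooth on $\Gamma\backslash G$, which is exactly what makes Theorems~\ref{Th4} and~\ref{Th5} applicable. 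Once this descent dictionary is recorded, the equivalence with parallelism is a one-line consequence.
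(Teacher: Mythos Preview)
Your proof is correct and follows essentially the same strategy as the paper: descend $X$ to the compact quotient $\Gamma\backslash G$ via the local isometry $\pi$, observe that the interpolating sesqui-harmonic conditions and parallelism are preserved under this descent, and then invoke Theorems~\ref{Th4} and~\ref{Th5}. You are in fact more careful than the paper in two respects---you explicitly verify the orientability of $\Gamma\backslash G$ (needed to apply Theorems~\ref{Th4} and~\ref{Th5}) and you spell out why the higher-order curvature terms in the Euler--Lagrange equations are intertwined by $\pi_{\ast}$---whereas the paper simply cites the descent discussion preceding the theorem and argues by contradiction.
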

\begin{proof}
Suppose that $X$ is a left-invariant vector field on $G$ which is non-parallel interpolating sesqui-harmonic, it follows that its projection on
$\Gamma\backslash G$ which is also denoted by $X$ is non-parallel interpolating sesqui-harmonic and as $\Gamma\backslash G$ is compact, this gives a contradiction by combining with Theorem \ref{Th4}, so we must have $X$ is parallel. Using the similar way, we can prove the result when $X$ is an interpolating sesqui-harmonic map by virtue of Theorem \ref{Th5}.
\end{proof}
\begin{cor}
Let $G$ be a Lie group having a discrete subgroup $\Gamma$ such that $\Gamma\backslash G$ is compact. Then
\begin{enumerate}
  \item A left-invariant vector field $X$ on $G$ is harmonic (resp. harmonic map) if and only if $X$ is parallel.
  \item A left-invariant vector field $X$ on $G$ is biharmonic (resp. biharmonic map) if and only if $X$ is parallel.
\end{enumerate}
\end{cor}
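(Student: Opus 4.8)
The plan is to deduce both assertions from the machinery already developed, by specializing the pair $(\delta_{1},\delta_{2})$ to $(1,0)$ in the harmonic case and to $(0,1)$ in the biharmonic case, and then checking that the arguments of Theorems \ref{Th4}--\ref{Th6} survive the vanishing of one of the two coefficients. Observe first that $E_{1,0}(X)=\int_{M}\|dX\|^{2}v_{g}$ is (twice) the energy, so that a critical point of $E_{1,0}$ restricted to $\mathfrak{X}(M)$ is exactly a harmonic vector field, while $X\colon(M,g)\to(TM,g_{S})$ being a harmonic map is exactly $\tau_{1,0}(X)=0$; likewise $E_{0,1}(X)=\int_{M}\|\tau(X)\|^{2}v_{g}$ is the bienergy, so critical points of $E_{0,1}$ on $\mathfrak{X}(M)$ are the biharmonic vector fields and $\tau_{0,1}(X)=0$ characterizes the biharmonic maps. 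The ``if'' direction is immediate in all four cases: if $\nabla X=0$ then $S(X)=0$, $\bar{\Delta}X=0$, and every curvature term appearing in Corollary \ref{co1} and Theorem \ref{Th2} vanishes, so $X$ solves the corresponding Euler--Lagrange system.

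For the ``only if'' direction I would first work on a compact oriented manifold and reuse the computation in the proof of Theorem \ref{Th4}, applied to the variation $X_{t}=(1+t)X$, which gives
\begin{equation*}
0=2\delta_{1}\int_{M}\|\nabla X\|^{2}v_{g}+2\delta_{2}\int_{M}\|\bar{\Delta}X\|^{2}v_{g}+4\delta_{2}\int_{M}\|S(X)\|^{2}v_{g}.
\end{equation*}
For $(\delta_{1},\delta_{2})=(1,0)$ this forces $\nabla X=0$ directly, covering both the harmonic vector field and, via the analogue of Theorem \ref{Th5}, the harmonic map cases, in accordance with the results of Ishihara--Nouhaud and \cite{medrano}. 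For $(\delta_{1},\delta_{2})=(0,1)$ it forces $\bar{\Delta}X=0$ and $S(X)=0$; combining $\bar{\Delta}X=0$ with the Bochner-type identity \eqref{a4}, namely $\int_{M}g(\bar{\Delta}X,X)v_{g}=\int_{M}\|\nabla X\|^{2}v_{g}$, again yields $\nabla X=0$, which settles the biharmonic vector field case, and then the biharmonic map case follows exactly as in the proof of Theorem \ref{Th5} (using Corollary \ref{co2} with $\delta_{1}=0$, $\delta_{2}=1$).

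Finally I would transport these statements from the compact quotient to $G$ exactly as in the proof of Theorem \ref{Th6}: a left-invariant vector field $X$ on $G$ that is harmonic, biharmonic, or a harmonic/biharmonic map descends to a vector field of the same type on the compact manifold $\Gamma\backslash G$, because $\pi\colon G\to\Gamma\backslash G$ preserves the Levi-Civita connection and the curvature tensor, hence also $S$, $\bar{\Delta}$ and the whole Euler--Lagrange operators; by the previous paragraph the descended field is parallel, so $X$ is parallel on $G$. I expect the only genuinely delicate point to be the relaxation of the ``same sign'' hypothesis of Theorem \ref{Th6} to the degenerate pairs $(1,0)$ and $(0,1)$: concretely, in the biharmonic case one must not stop at $\bar{\Delta}X=0$ but push on to $\nabla X=0$ through \eqref{a4}, everything else being a direct substitution into results already established.
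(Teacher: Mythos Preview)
Your proof is correct and follows the same route the paper intends: the paper gives no explicit argument for this corollary, treating it as an immediate specialization of Theorem~\ref{Th6} (and hence of Theorems~\ref{Th4}--\ref{Th5}) to $(\delta_{1},\delta_{2})=(1,0)$ and $(0,1)$. You carry this out faithfully and are in fact more careful than the paper on one point: in the biharmonic case the identity displayed in the proof of Theorem~\ref{Th4} only yields $\bar{\Delta}X=0$ and $S(X)=0$ when $\delta_{1}=0$, and you correctly invoke \eqref{a4} to upgrade $\bar{\Delta}X=0$ to $\nabla X=0$, a step the paper's phrasing ``we conclude that $\nabla X=0$'' glosses over (though it is implicit in the result of \cite{markellos1} that the paper cites). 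The descent from $G$ to $\Gamma\backslash G$ is handled exactly as in the proof of Theorem~\ref{Th6}.
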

\begin{cor}\label{co4}
Let $G$ be a three-dimensional unimodular Lie group. Assuming that $\delta_{1}$ and $\delta_{2}$ have the same sign, then a left-invariant vector field $X$ on $G$ is interpolating sesqui-harmonic (resp. interpolating sesqui-harmonic map) if and only if $X$ is parallel.
\end{cor}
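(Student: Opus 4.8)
The plan is to deduce Corollary \ref{co4} directly from Theorem \ref{Th6} by invoking a classical structural fact about three-dimensional unimodular Lie groups. Specifically, I would recall that, as a consequence of Milnor's work \cite{milnor} (already cited in the paragraph preceding the corollary), every simply connected three-dimensional unimodular Lie group $G$ admits a discrete subgroup $\Gamma$ for which the quotient $\Gamma\backslash G$ is compact. Thus $G$ satisfies the hypotheses of Theorem \ref{Th6}.

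The argument then proceeds in three short steps. First, I would note that the notion of being an interpolating sesqui-harmonic vector field (resp. map) is determined entirely by the left-invariant data of $G$: by Corollary \ref{co1} and Corollary \ref{co2} the relevant equations involve only $\nabla X$, $\bar\Delta X$, $S(X)$ and the curvature tensor $R$, all of which are left-invariant when $X$ is left-invariant, so it suffices to work on the simply connected cover; passing to a quotient by a discrete subgroup changes nothing since the projection $\pi\colon G\to\Gamma\backslash G$ is a local isometry and carries left-invariant tensor fields to well-defined tensor fields with the same pointwise norms (as recalled in the opening discussion of Section~4). Second, I would apply Theorem \ref{Th6} with this particular $\Gamma$: since $\delta_1$ and $\delta_2$ have the same sign and $\Gamma\backslash G$ is compact, a left-invariant $X$ on $G$ is interpolating sesqui-harmonic (resp.\ an interpolating sesqui-harmonic map) if and only if its projection is, if and only if the projection is parallel, if and only if $X$ is parallel. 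Third, the converse direction is immediate from Corollary \ref{co1} (resp.\ Theorem \ref{Th2}): a parallel vector field has $\nabla X=0$, hence $\bar\Delta X=0$ and $S(X)=0$, so all the terms in the characterizing equations vanish.

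I do not expect a genuine obstacle here, since the corollary is essentially a specialization of Theorem \ref{Th6}. The only point requiring a little care is justifying the reduction to the simply connected case: a general three-dimensional unimodular Lie group need not itself admit a cocompact lattice, but its universal cover does, and left-invariant vector fields (together with the left-invariant metric) descend from the cover to $G$ and then further to $\Gamma\backslash G$; being parallel is preserved and reflected by these local isometries. Once this is spelled out, the statement follows verbatim from Theorem \ref{Th6}. Hence I would keep the proof to a couple of sentences: invoke Milnor's existence of a cocompact $\Gamma$ for the universal cover, then quote Theorem \ref{Th6}.
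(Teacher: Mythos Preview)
Your proposal is correct and matches the paper's approach: the paper presents this corollary without a separate proof, relying on the fact recorded just before Theorem~\ref{Th6} (from \cite{milnor}) that any three-dimensional unimodular Lie group admits a discrete subgroup $\Gamma$ with $\Gamma\backslash G$ compact, so Theorem~\ref{Th6} applies directly. Your extra care about passing to the simply connected cover is harmless but unnecessary here, since the paper asserts the existence of such a $\Gamma$ for $G$ itself.
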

\begin{cor}\label{co5}
Let $G$ be a three-dimensional unimodular Lie group. Then
\begin{enumerate}
  \item A left-invariant vector field $X$ on $G$ is harmonic (resp. harmonic map) if and only if $X$ is parallel.
  \item A left-invariant vector field $X$ on $G$ is biharmonic (resp. biharmonic map) if and only if $X$ is parallel.
  \end{enumerate}
\end{cor}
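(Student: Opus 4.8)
The plan is to regard the harmonic case as the value $(\delta_1,\delta_2)=(1,0)$ and the biharmonic case as the value $(\delta_1,\delta_2)=(0,1)$ of the interpolating sesqui-harmonic problem, to settle both over a compact oriented base via the argument of Theorem \ref{Th4}, and then to transfer the conclusion to an arbitrary three-dimensional unimodular Lie group by the descent used for Theorem \ref{Th6}. For these two choices of constants the functional $E_{\delta_1,\delta_2}$ coincides, up to an additive constant, with the energy $E$ and the bienergy $E_2$ respectively, so ``interpolating sesqui-harmonic vector field (resp.\ map)'' specializes to ``harmonic vector field (resp.\ harmonic map)'' and to ``biharmonic vector field (resp.\ biharmonic map)''. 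Corollary \ref{co4} is stated under the hypothesis that $\delta_1$ and $\delta_2$ have the same sign, which nominally excludes the endpoints $\delta_2=0$ and $\delta_1=0$, but the proof of Theorem \ref{Th4} goes through at these endpoints as well: for the variation $X_t=(1+t)X$ the first variation formula \eqref{S1} together with \eqref{a4} yields $0=2\delta_1\int_M\|\nabla X\|^2 v_g+2\delta_2\int_M\|\bar\Delta X\|^2 v_g+4\delta_2\int_M\|S(X)\|^2 v_g$, exactly as in the proof of Theorem \ref{Th4}. With $(\delta_1,\delta_2)=(1,0)$ this forces $\nabla X=0$ directly; with $(\delta_1,\delta_2)=(0,1)$ it forces $\bar\Delta X=0$ and $S(X)=0$, after which \eqref{a4} again upgrades $\bar\Delta X=0$ to $\nabla X=0$. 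Hence, on a compact oriented manifold, a vector field that is harmonic or biharmonic --- either as a vector field or as a map --- is parallel. (Alternatively one may quote the known facts that a harmonic vector field on a compact manifold is parallel \cite{ishihara,nouhaud,medrano} and, by Theorem~3.4 in \cite{markellos1}, so is a biharmonic one.)

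Next I would transfer this to a general three-dimensional unimodular Lie group $G$. By \cite{milnor}, such a $G$ admits a discrete subgroup $\Gamma$ with $\Gamma\backslash G$ compact; a left-invariant vector field $X$ on $G$ descends to a vector field on $\Gamma\backslash G$, and since the projection $\pi:G\to\Gamma\backslash G$ is a local isometry, $X$ enjoys any one of the four properties (harmonic, biharmonic, harmonic map, biharmonic map) on $G$ if and only if its projection does on $\Gamma\backslash G$, and $X$ is parallel on $G$ if and only if its projection is. This is precisely the mechanism used to prove Theorem \ref{Th6} (so (1) and (2) are just the specialization, via the Milnor fact, of the analogous statement for Lie groups admitting a cocompact discrete subgroup); applying the compact conclusion of the previous paragraph on $\Gamma\backslash G$ gives the ``only if'' directions. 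For the converse, if $\nabla X=0$ then $S(X)=0$, $\bar\Delta X=0$, and every term in the formula of Theorem \ref{Th1} vanishes for all $\delta_1,\delta_2$; thus $\tau(X)=0$ and $\tau_2(X)=0$, so $X$ is a harmonic map and a biharmonic map, and a fortiori a harmonic and a biharmonic vector field.

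I do not expect a genuine obstacle here: the statement merely packages Corollary \ref{co4} (together with the classical results on harmonic and biharmonic vector fields) with the Milnor cocompact-lattice fact and the local-isometry descent already exploited in Theorem \ref{Th6}. The one point requiring care is the verification that the argument of Theorem \ref{Th4} survives at the two endpoint values of $(\delta_1,\delta_2)$; in the biharmonic endpoint the extra input is exactly the divergence identity \eqref{a4}, which promotes $\bar\Delta X=0$ to $\nabla X=0$.
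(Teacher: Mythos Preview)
Your proposal is correct and follows the same route the paper intends: in the paper Corollary~\ref{co5} is stated without proof as an immediate consequence of the preceding corollaries (Theorem~\ref{Th6} specialized to $(\delta_1,\delta_2)=(1,0)$ and $(0,1)$, combined with Milnor's cocompact-lattice fact for three-dimensional unimodular Lie groups), with the subsequent remark pointing to \cite{alem} for an independent proof. Your explicit check that the argument of Theorem~\ref{Th4} survives at the endpoint values --- in particular the use of \eqref{a4} to promote $\bar\Delta X=0$ to $\nabla X=0$ in the biharmonic case --- is a detail the paper leaves implicit, but the overall strategy is identical.
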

\begin{rem}
We have proved the Corollary \ref{co5} see Theorems 4.1 and 4.2 in \cite{alem}.
\end{rem}

Now, we completely determine the set of left-invariant interpolating sesqui-harmonic vector fields and left-invariant vector fields which are interpolating sesqui-harmonic maps on the Heisenberg group $Nil$ considered as the Cartesian $3$-space $\mathbb{R}^{3}(x,y,z)$ equipped with the left-invariant metric $g_{Nil}$ given by
$g_{Nil}=(dx)^{2}+(dy-xdz)^{2}+(dz)^{2}.$ Since the Heisenberg group $Nil$ is three-dimensional unimodular Lie group, then by the Corollary \ref{co4} we will restrict ourselves to the case where $\delta_{1}$ and $\delta_{2}$ have different signs. The left-invariant vector fields
$\{e_{1}=\frac{\partial}{\partial x},\;e_{2}=\frac{\partial}{\partial y},\;e_{3}=\frac{\partial}{\partial z}+x
    \frac{\partial}{\partial y}\}$
constitute an orthonormal basis of the Lie algebra $\mathfrak{g}$ of $Nil$. The corresponding components of the Levi-Civita connection are determined by
\begin{align}\label{eq3.5}
\nabla_{e_{1}}e_{1}&=0,           &  \nabla_{e_{1}}e_{2} &=-\frac{1}{2}e_{3},              &  \nabla_{e_{1}}e_{3}&=\frac{1}{2}e_{2},\nonumber\\
\nabla_{e_{2}}e_{1}&=-\frac{1}{2}e_{3},         &  \nabla_{e_{2}}e_{2}&=0,   &  \nabla_{e_{2}}e_{3}&=\frac{1}{2}e_{1},\\
\nabla_{e_{3}}e_{1}&=-\frac{1}{2}e_{2},   &  \nabla_{e_{3}}e_{2}&=\frac{1}{2}e_{1},          &  \nabla_{e_{3}}e_{3}&=0.\nonumber
\end{align}
Also the curvature components are given by
\begin{align}\label{eq3.6}
R( e_{1},e_{2})e_{1}&=-\frac{1}{4}e_{2},           &  R( e_{1},e_{2})e_{2} &=\frac{1}{4}e_{1},&  R( e_{1},e_{2})e_{3}&=0,\nonumber\\
R( e_{2},e_{3})e_{1}&=0,         &  R( e_{2},e_{3})e_{2}&=-\frac{1}{4}e_{3},   & R( e_{2},e_{3})e_{3}&=\frac{1}{4}e_{2},\\
R( e_{3},e_{1})e_{1}&=-\frac{3}{4}e_{3},   &  R( e_{3},e_{1})e_{2}&=0,          &  R( e_{3},e_{1})e_{3}&=\frac{3}{4}e_{1}.\nonumber
\end{align}
Let $X=\alpha e_{1}+\beta e_{2}+\gamma e_{3}$ an arbitrary left-invariant vector field on $(Nil,g_{Nil})$. By using \eqref{eq3.5} and \eqref{eq3.6}, we yield
\begin{align}\label{eq3.8}
  \bar{\Delta}X  &= \frac{\alpha}{2}e_{1}+\frac{\beta}{2}e_{2}+\frac{\gamma}{2}e_{3},\nonumber  \\
  \bar{\Delta}\bar{\Delta} X &= \frac{\alpha}{4}e_{1}+\frac{\beta}{4}e_{2}+\frac{\gamma}{4}e_{3},\nonumber \\
  S(X) &= \frac{-\beta \gamma}{4}e_{1}+\frac{\alpha \beta}{4}e_{3}.
\end{align}
By virtue of \eqref{eq3.5}-\eqref{eq3.8}, a long but straightforward calculation we get
\begin{align}\label{eq3.9}
\delta_{1}&\bar{\Delta}X+\delta_{2}\bar{\Delta}\bar{\Delta} X+\delta_{2}\sum_{i=1}^{3}[(\nabla_{e_i}R)(e_i,S(X))X+R(e_i,\nabla_{e_i}S(X))X
\nonumber\\&+2R(e_i,S(X))\nabla_{e_i}X]=\delta_{1}\bar{\Delta}X+\delta_{2}\bar{\Delta}\bar{\Delta} X+\delta_{2}\sum_{i=1}^{3}[\nabla_{e_i}R(e_i,S(X))X\nonumber\\&+R(e_i,S(X))\nabla_{e_i}X]
=\frac{\alpha(8\delta_{1}+\delta_{2}(4+\beta^{2}))}{16}e_{1}\nonumber\\&+
\frac{\beta(8\delta_{1}+\delta_{2}(4+\alpha^{2}+\gamma^{2}))}{16}e_{2}
+\frac{\gamma(8\delta_{1}+\delta_{2}(4+\beta^{2}))}{16}e_{3}.
\end{align}
On the other hand using \eqref{eq3.5}-\eqref{eq3.8}, a long but direct calculations we find
\begin{equation}\label{eq3.10}
    \bar{\Delta}S(X)  = -\frac{\beta\gamma}{8}e_{1}+\frac{\alpha\beta}{8}e_{3},
\end{equation}
\begin{equation}\label{eq3.11}
    R(X,\bar{\Delta}X)S(X) = 0,
\end{equation}
\begin{equation}\label{eq3.12}
    \sum_{i=1}^{3}R(X,\nabla_{e_i}\bar{\Delta}X)e_i = \frac{\beta\gamma}{8}e_{1}-\frac{\alpha\beta}{8}e_{3},
\end{equation}
\begin{equation}\label{eq3.13}
    \sum_{i=1}^{3}R(\nabla_{e_i}X,\bar{\Delta}X)e_i = -\frac{\beta\gamma}{8}e_{1}+\frac{\alpha\beta}{8}e_{3},
\end{equation}
\begin{equation}\label{eq3.14}
    \sum_{i=1}^{3} R(e_i,S(X))e_i = -\frac{\beta\gamma}{8}e_{1}+\frac{\alpha\beta}{8}e_{3},
\end{equation}
\begin{equation}\label{eq3.15}
    \sum_{i=1}^{3} (\nabla_{S(X)}R)(\nabla_{e_i}X,X)e_i = \frac{\beta\gamma(\alpha^{2}+\beta^{2}+\gamma^{2})}{16}e_{1}-
    \frac{\alpha\beta(\alpha^{2}+\beta^{2}+\gamma^{2})}{16}e_{3},
\end{equation}
\begin{equation}\label{eq3.16}
    \sum_{i=1}^{3}R(X,\nabla_{e_i}X)\nabla_{e_i}S(X) =
    \frac{\beta\gamma(3\alpha^{2}+3\gamma^{2}-\beta^{2})}{64}e_{1}-\frac{
    \alpha\beta(3\alpha^{2}+3\gamma^{2}-\beta^{2})}{64}e_{3},
\end{equation}
\begin{equation}\label{eq3.17}
    \sum_{i=1}^{3}R(X,R(e_i,S(X))X)e_i =  -\frac{\beta\gamma(9\alpha^{2}+9\gamma^{2}+\beta^{2})}{64}e_{1}+\frac{
    \alpha\beta(9\alpha^{2}+9\gamma^{2}+\beta^{2})}{64}e_{3}.
\end{equation}
From \eqref{eq3.10}-\eqref{eq3.17}, we yield
\begin{align}\label{eq3.18}
   &\delta_{1}S(X)+\delta_{2}\bar{\Delta}S(X)+\delta_{2}R(X,\bar{\Delta} X)S(X)-\delta_{2}\sum_{i=1}^{3}[R(X,\nabla_{e_i}\bar{\Delta} X)e_i\nonumber\\&
   -R(\nabla_{e_i}X,\bar{\Delta} X)e_i-R(e_i,S(X))e_i
   -(\nabla_{S(X)}R)(\nabla_{e_i}X,X)e_i\nonumber\\
   &+R(X,\nabla_{e_i}X)\nabla_{e_i}S(X)-
   R(X,R(e_i,S(X))X)e_i]\nonumber\\&=-\frac{\beta \gamma(4\delta_{1}+\delta_{2}(8+\alpha^{2}+\gamma^{2}-2\beta^{2}))}{16}e_{1}+
   \frac{\alpha \beta(4\delta_{1}+\delta_{2}(8+\alpha^{2}+\gamma^{2}-2\beta^{2}))}{16}e_{3}.
\end{align}
From \eqref{eq3.9} and \eqref{eq3.18}, we deduce that $X$ is an interpolating sesqui-harmonic map if and only if
\begin{equation}\label{eq3.19}
\left\{
  \begin{array}{ll}
    \alpha(8\delta_{1}+\delta_{2}(4+\beta^{2}))=0, & \hbox{} \\
    \beta(8\delta_{1}+\delta_{2}(4+\alpha^{2}+\gamma^{2}))=0, & \hbox{} \\
    \gamma(8\delta_{1}+\delta_{2}(4+\beta^{2}))=0, & \hbox{}
  \end{array}
\right.
\end{equation}
and
\begin{equation}\label{eq3.20}
\left\{
  \begin{array}{ll}
    \beta \gamma(4\delta_{1}+\delta_{2}(8+\alpha^{2}+\gamma^{2}-2\beta^{2}))=0, & \hbox{} \\
    \alpha \beta(4\delta_{1}+\delta_{2}(8+\alpha^{2}+\gamma^{2}-2\beta^{2}))=0. & \hbox{}
  \end{array}
\right.
\end{equation}
In particular, $X$ is an interpolating sesqui-harmonic vector field if and only if \eqref{eq3.19} holds. From
the system \eqref{eq3.19}, the subcases $\beta=\gamma=0$, $\alpha=\gamma=0$  and $\alpha=\beta=0$ give vector fields $X=\alpha e_{1}$, $X=\beta e_{2}$ and $X=\gamma e_{3}$ respectively such that $\delta_{1}=-\frac{1}{2}\delta_{2}$ which define by \eqref{eq3.20} an interpolating sesqui-harmonic maps. If $\alpha=0$ (resp. $\gamma=0$), the system \eqref{eq3.19} give the solutions $X=\pm2\sqrt{-\frac{(2\delta_{1}+\delta_{2})}{\delta_{2}}}e_{2}
\pm2\sqrt{-\frac{(2\delta_{1}+\delta_{2})}{\delta_{2}}}e_{3}$ (resp. $X=\pm2\sqrt{-\frac{(2\delta_{1}+\delta_{2})}{\delta_{2}}}e_{1}
\pm2\sqrt{-\frac{(2\delta_{1}+\delta_{2})}{\delta_{2}}}e_{2}$) whenever $\frac{(2\delta_{1}+\delta_{2})}{\delta_{2}}<0$, which define by \eqref{eq3.20} an interpolating sesqui-harmonic maps if $\delta_{1}=-\delta_{2}$. If $\beta=0$, the system \eqref{eq3.19} gives the solution $X=\alpha e_{1}+
\gamma e_{3}$ such that $\delta_{1}=-\frac{1}{2}\delta_{2}$ which defines by \eqref{eq3.20} an interpolating sesqui-harmonic map. If $\alpha\neq 0$, $\beta\neq 0$, $\gamma\neq 0$, the system \eqref{eq3.19} admits $\beta=\pm2\sqrt{-\frac{(2\delta_{1}+\delta_{2})}{\delta_{2}}}$ and the cylinder
$\alpha^{2}+\gamma^{2}=-4\frac{(2\delta_{1}+\delta_{2})}{\delta_{2}}$ whenever $\frac{(2\delta_{1}+\delta_{2})}{\delta_{2}}<0$. Therefore, the coordinates of $X$ satisfy the equations of circles:
\begin{align*}
   C^{1}&=\biggl\{\alpha^{2}+\gamma^{2}=-4\frac{(2\delta_{1}+\delta_{2})}{\delta_{2}},\;
   \beta=2\sqrt{-\frac{(2\delta_{1}+\delta_{2})}{\delta_{2}}}\biggr\};\\
   C^{2}&=\biggl\{\alpha^{2}+\gamma^{2}=-4\frac{(2\delta_{1}+\delta_{2})}{\delta_{2}},\;
   \beta=-2\sqrt{-\frac{(2\delta_{1}+\delta_{2})}{\delta_{2}}}\biggr\},
\end{align*}
and by virtue of \eqref{eq3.20} the corresponding vector fields define an interpolating sesqui-harmonic maps if $\delta_{1}=-\delta_{2}$.

Summarizing, we yield
\begin{thm}\label{Th7}
Let $X=\alpha e_{1}+\beta e_{2}+\gamma e_{3}$ be a left-invariant vector field on the Heisenberg group $(Nil,g_{Nil})$. Then
$X$ is an interpolating sesqui-harmonic vector field if and only if
\begin{itemize}
  \item $X=\alpha e_{1},\;\beta e_{2},\;\gamma e_{3},$ with $\delta_{1}=-\frac{1}{2}\delta_{2}$. The corresponding vector fields define an interpolating sesqui-harmonic maps.
  \item $X=\pm2\sqrt{-\frac{(2\delta_{1}+\delta_{2})}{\delta_{2}}}e_{2}
\pm2\sqrt{-\frac{(2\delta_{1}+\delta_{2})}{\delta_{2}}}e_{3}$ (resp. $X=\pm2\sqrt{-\frac{(2\delta_{1}+\delta_{2})}{\delta_{2}}}e_{1}
\pm2\sqrt{-\frac{(2\delta_{1}+\delta_{2})}{\delta_{2}}}e_{2}$) whenever $\frac{(2\delta_{1}+\delta_{2})}{\delta_{2}}<0$, which define an interpolating sesqui-harmonic maps if $\delta_{1}=-\delta_{2}$.
  \item $X=\alpha e_{1}+\beta e_{2}+\gamma e_{3}$ such that the coordinates of $X$ satisfy the equations of circles $C^{1}$ and $C^{2}$. The corresponding vector fields define an interpolating sesqui-harmonic maps if $\delta_{1}=-\delta_{2}$.
\end{itemize}
\end{thm}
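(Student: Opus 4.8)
The plan is to invoke Corollary~\ref{co1} and Corollary~\ref{co2}, which reduce interpolating sesqui-harmonicity of a left-invariant vector field (resp.\ of the associated map into $(TM,g_S)$) to the vanishing of two explicit expressions built from $\bar\Delta X$, $\bar\Delta\bar\Delta X$, $S(X)$ and the curvature tensor $R$. On the Heisenberg group $Nil$, with a fixed left-invariant orthonormal frame $\{e_1,e_2,e_3\}$, every quantity entering those expressions is computable algebraically once the Levi-Civita connection \eqref{eq3.5} and the curvature \eqref{eq3.6} are recorded. So the theorem will follow from a case-by-case analysis of the resulting polynomial systems \eqref{eq3.19} and \eqref{eq3.20} in the constant components $\alpha,\beta,\gamma$ of $X=\alpha e_1+\beta e_2+\gamma e_3$.

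First I would fix the connection components \eqref{eq3.5} and deduce the curvature components \eqref{eq3.6} from $R(X,Y)Z=\nabla_X\nabla_Y Z-\nabla_Y\nabla_X Z-\nabla_{[X,Y]}Z$ and the structure constants of $\mathfrak g$. Next, for $X=\alpha e_1+\beta e_2+\gamma e_3$ I would compute $\nabla_{e_i}X$, then $\bar\Delta X$, $\bar\Delta\bar\Delta X$ and $S(X)=\sum_i R(\nabla_{e_i}X,X)e_i$, obtaining \eqref{eq3.8}. Substituting these into the first expression of Corollary~\ref{co1} gives \eqref{eq3.9}, whose vanishing is exactly system \eqref{eq3.19}, the condition for $X$ to be an interpolating sesqui-harmonic vector field. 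For the map case I would additionally compute the seven curvature terms \eqref{eq3.10}--\eqref{eq3.17} --- the delicate ones being $\sum_i(\nabla_{S(X)}R)(\nabla_{e_i}X,X)e_i$ and $\sum_iR(X,R(e_i,S(X))X)e_i$, which need the covariant derivative of $R$, obtained from the connection data --- and combine them to get \eqref{eq3.18}; vanishing of \eqref{eq3.18} is system \eqref{eq3.20}, so by Corollary~\ref{co2} the map is interpolating sesqui-harmonic iff both \eqref{eq3.19} and \eqref{eq3.20} hold.

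It then remains to solve \eqref{eq3.19}, which I would organize by which of $\alpha,\beta,\gamma$ vanish: the three ``axis'' cases $X=\alpha e_1$, $X=\beta e_2$, $X=\gamma e_3$, each forcing $\delta_1=-\frac12\delta_2$; the cases $\alpha=0$ and $\gamma=0$, giving the stated two-parameter solutions supported on $\{e_2,e_3\}$ (resp.\ $\{e_1,e_2\}$) when $\frac{2\delta_1+\delta_2}{\delta_2}<0$; the case $\beta=0$, giving $X=\alpha e_1+\gamma e_3$ with $\delta_1=-\frac12\delta_2$; and the generic case $\alpha\beta\gamma\neq 0$, where the second equation of \eqref{eq3.19} fixes $\beta^2=-4\frac{2\delta_1+\delta_2}{\delta_2}$ and the first and third then force $\alpha^2+\gamma^2=-4\frac{2\delta_1+\delta_2}{\delta_2}$, i.e.\ the circles $C^1,C^2$. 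For each family I would substitute back into \eqref{eq3.20} to read off the extra constraint ($\delta_1=-\delta_2$ in the two-parameter and circle cases, automatically satisfied in the cases where $\alpha\beta=\beta\gamma=0$) under which $X$ is moreover an interpolating sesqui-harmonic map. Assembling the cases yields precisely the list in the statement.

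The main obstacle is purely the computational bookkeeping: evaluating \eqref{eq3.8}--\eqref{eq3.18}, in particular the curvature-derivative sums, without sign slips, and then checking that the case split of \eqref{eq3.19} is exhaustive and introduces no spurious solutions --- e.g.\ confirming that $\frac{2\delta_1+\delta_2}{\delta_2}<0$ together with $\delta_1=-\delta_2$ forces $\delta_1,\delta_2$ to have opposite signs, so that these examples genuinely lie outside the range covered by Corollary~\ref{co4}.
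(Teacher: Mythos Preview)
Your proposal is correct and follows essentially the same approach as the paper: compute the connection and curvature data \eqref{eq3.5}--\eqref{eq3.6}, obtain \eqref{eq3.8}--\eqref{eq3.18}, reduce via Corollaries~\ref{co1} and~\ref{co2} to the polynomial systems \eqref{eq3.19}--\eqref{eq3.20}, and finish with the case split on which of $\alpha,\beta,\gamma$ vanish. One minor bookkeeping slip to correct when you write it out: in the generic case $\alpha\beta\gamma\neq 0$ it is the first and third equations of \eqref{eq3.19} that fix $\beta^{2}$, while the second fixes $\alpha^{2}+\gamma^{2}$ (you stated the reverse), and the $\alpha=0$ and $\gamma=0$ cases yield finitely many vector fields (from the $\pm$ signs), not two-parameter families.
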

\begin{rem}It should be noted already that a vector field is interpolating sesqui-harmonic does not automatically imply that the corresponding map is interpolating sesqui-harmonic map. Indeed, from Theorem \ref{Th7} if $\delta_{1}\neq-\delta_{2}$, the vector fields  $X=\pm2\sqrt{-\frac{(2\delta_{1}+\delta_{2})}{\delta_{2}}}e_{1}
\pm2\sqrt{-\frac{(2\delta_{1}+\delta_{2})}{\delta_{2}}}e_{2}$ are interpolating sesqui-harmonic but do not define an interpolating sesqui-harmonic maps.
\end{rem}


\begin{thebibliography}{99}
\bibitem {alem} Alem, A., Kacimi, B., \"{O}zkan, M.: Vector fields which are biharmonic maps. J. of geometry. \textbf{113 }14, (2022) DOI https://doi.org/10.1007/s00022-022-00627-5.

\bibitem{baird} Baird, P., Wood, J.C.: Harmonic morphisms between Riemannian manifolds. In: Lond. Math. Soc. Monogr., vol. 29. Oxford University Press, Oxford (2003)

\bibitem{boothby} Boothby, W.M.: An Introduction to Differentiable Manifolds and Riemannian Geometry. Academic Press, New York (1986)

\bibitem{branding1} Branding, V.: On interpolating sesqui-harmonic maps between riemannian manifolds. J. Geom. Anal. \textbf{30}, 248--273 (2020).

\bibitem{branding2} Branding, V.: Some analytic results on interpolating sesqui-harmonic maps.  Annali. di. Matematica. \textbf{199}, 2039--2059 (2020).

\bibitem{dombrowski} Dombrowski, P: On the geometry of tangent bundle. J.
Reine Angew. Math. \textbf{210}, 73--88 (1962).

\bibitem{dragomir} Dragomir, S., Perrone, D.: Harmonic Vector fields: Variational Principles and Differential
Geometry. Elsevier, Amsterdam (2011)

\bibitem{eells1} Eells, J., Lemaire, L.: Selected topics in harmonic maps, in: CBMS Regional Conference Series in Mathematics, vol. 50, Amer. Math. Soc., Providence, 1983.

\bibitem{eells2} Eells, J., Sampson, J.H.: Harmonic mappings of Riemannian manifolds. Amer. J. Math.
\textbf{86}(1), 109--160 (1964).

\bibitem{medrano} Gil-Medrano, O.: Relationship between volume and energy of unit vector fields. Differ. Geo. Appl.
\textbf{15}, 137--152 (2001).

\bibitem{gonzalez} Gonz\'{a}lez-D\'{a}vila, J.C., Vanhecke, L.: Energy and volume of unit vector fields
on three-dimensional Riemannian manifolds. Differ. Geo. Appl. \textbf{16}, 225--244 (2002).

\bibitem{helein}H\'{e}lein, F., Wood, J.C.: Harmonic maps. In: Handbook of global analysis, pp 417--491, 1213. Elsevier Sci. B. V., Amsterdam (2008)

\bibitem{ishihara} Ishihara, T.: Harmonic sections of tangent bundles. J. Math. Univ. Tokushima.
\textbf{13}, 23--27 (1979).

\bibitem{jiang} Jiang, G.: 2-Harmonic maps and their first and second variational formulas. Translated into English by Hajime Urakawa. Note Mat. \textbf{28}(suppl. n. 1), 209--232 (2008).

\bibitem{karaca} Karaca, F., \"{O}zg\"{u}r, C., De, U.C: On interpolating sesqui-harmonic Legendre curves in Sasakian space forms. Int J. Geom. Methods Mod. Phys. \textbf{17}(1), 2050005 (2020).

\bibitem{kowalski} Kowalski, O.: Curvature of the induced Riemannian metric on the tangent bundle of a Riemannian manifold. J. Reine Angew. Math. \textbf{250}, 124--129 (1971).

\bibitem{markellos1} Markellos, M., Urakawa, H.: The biharmonicity of sections of the tangent bundle. Monatsh. Math. \textbf{178}, 389--404 (2015).


\bibitem{milnor} Milnor, J.: Curvature of left invariant metrics on Lie groups. Adv. Math. \textbf{21}, 293--329 (1976).

\bibitem{nouhaud} Nouhaud, O.: Applications harmoniques d'une vari\'{e}t\'{e} Riemannienne dans son fibr\'{e} tangent. C R Acad. Sci. Paris \textbf{284}, 815--818 (1977).

\bibitem{ou1} Ou, Y.L, Chen, B.Y.: Biharmonic Submanifolds and Biharmonic Maps in Riemannian Geometry. World Scientific (2020)

\bibitem{polyakov} Polyakov, A.: Fine structure of strings, Nucl. Phys. B \textbf{268}(2), 406--412 (1986).

\bibitem{urakawa} Urakawa, H.: Geometry of Biharmonic Mappings: Differential
Geometry of Variational Methods. World Scientific (2018)

\bibitem{yano} Yano, K., Ishihara, S.: Tangent and Cotangent Bundles
Dekker, New York (1973)

\end{thebibliography}
\end{document}